\newcommand{\ds}{\displaystyle}
\newcommand{\lra}{\rightarrow}
\renewcommand{\P}{\frak{P}}
\newtheorem{Thm}{Theorem}[section]
\newtheorem{Prop}[Thm]{Proposition}
\newtheorem{Cor}[Thm]{Corollary}
\newtheorem{Lem}[Thm]{Lemma}
\newtheorem{Rem}[Thm]{Remark}
\newcommand{\bQ}{\overline{\mathbb{Q}}}
\newcommand{\Q}{\mathbb{Q}}
\newcommand{\Z}{\mathbb{Z}}
\newcommand{\F}{\mathbb{F}}
\newcommand{\M}{{\cal M}}
\renewcommand{\l}{{\frak l}}
\renewcommand{\O}{{\cal O}}
\renewcommand{\ker}[1]{{\rm ker}\hspace{0.1cm}{#1}}
\renewcommand{\P}{{\frak P}}
\newcommand{\p}{{\frak p}}
\newcommand{\mapright}[1]{%
  \smash{\mathop{%
  \hbox to 1cm{\rightarrowfill}}\limits^{#1}}}
\newcommand{\mapleft}[1]{%
  \smash{\mathop{%
  \hbox to 1cm{\leftarrowfill}}\limits^{#1}}}
\newcounter{CounterEQUlabel}
\newcommand{\EQUlabel}[1]{\label{#1}
\ifcase \theCounterEQUlabel
   \relax
  \or
   \hspace{1em}
   \mbox{{\color{blue} \tiny$\langle$\rmfamily#1$\rangle$}}\index{zzz#1@#1}
  \fi
}
\title{On the class numbers of the fields of the 
$p^n$-torsion points of elliptic curves over $\Q$
\\
{\small {\it Dedicated to Professor Hirotada Naito's 60th birthday}}
}
\author{Fumio SAIRAIJI and Takuya YAMAUCHI
\footnote{The second author
is partially supported by JSPS Grant-in-Aid for Scientific Research (C) No.15K04787.}
}
\date{}
\begin{document}

\maketitle
\begin{abstract}
Let $E$ be an elliptic curve over $\Q$ 
which has multiplicative reduction at a fixed prime $p$. 
Assume $E$ has multiplicative reduction or potentially good reduction at 
any prime not equal to $p$. 
For each positive integer $n$ we put $K_n:=\Q(E[p^n])$. 
The aim of this paper is to extend the authors' previous results in \cite{SY} 
concerning with the order of the $p$-Sylow group of the ideal class group of $K_n$ to more general setting. 
We also modify the previous lower bound of the order given in terms of the Mordell-Weil rank of $E(\Q)$ and 
the ramification related to $E$. 
\end{abstract}
\section{Introduction}
This article is a sequel of \cite{SY}. 
Let $p$ be a prime number and $E$ be an elliptic curve over $\Q$.  
For each positive integer $n$, 
we consider the field $K_n$ generated by the coordinates of points 
on $E[p^n]$ over $\Q$. 
In \cite{SY} the authors studied a lower bound of 
the $p$-part of the class number $h_{K_n}$ of $K_n$ in terms of the Mordell-Weil rank of $E(\Q)$ when $E$ has prime conductor $p$.
The present article extends this result to a more extensive class of elliptic curves over $\Q$. 

For such an elliptic curve, we will carry out a similar estimation done in \cite{SY} but at the same time 
we give an improvement of the method of the estimation. 
%This enable us to gain a lower bound which is the half size of the previous estimation in most cases. 
As we have done in \cite{SY} the lower bound will be given in terms of the Mordell-Weil rank and the information coming from the ramification related to $E$. Our formula is reminiscent of Iwasawa's class number formula for $\Z_p$-extension. 
In fact we have an explicit class number formula in a special case (see Corollary \ref{cor}).    

Our study is motivated by the works of Greenberg \cite{G} and Komatsu-Fukuda-Yamagata \cite{FKY} who have studied a lower bound of Iwasawa invariants for CM fields in terms of the Mordell-Weil group 
of the corresponding CM abelian varieties. 
We have pursued an analogue for non-CM elliptic curves since \cite{SY}. 
 
To state our main theorem we introduce our notation. 
The Mordell-Weil theorem asserts that $E(\Q)$ is a finitely generated 
abelian group. 
Thus there exists a free abelian subgroup $A$ of $E(\Q)$ of finite rank such that $A+E(\Q)_{{\rm tors}}=E(\Q)$. We denote the rank of $A$ by $r$. 
We put $G_n:={\rm Gal}(K_n/\Q)$ and 
$L_n:=K_n([p^n]_E^{-1}A)$, 
where $[p^n]_E$ is the multiplication-by-$p^n$ map on $E$. 
We denote generators of $A$ by $P_1,\ldots,P_{r}$. 
For each $j$ in $\{1,\ldots,r\}$ we take a point $T_j$ of $E(L_n)$ satisfying 
$$
[p^n]_E(T_j)=P_j.
$$
Then we have $L_n=K_n(T_1,\ldots,T_{r})$. 
The Galois action on $\{T_j\}_j$ naturally induces an injective 
$G_n$-homomorphism 
$$
\Phi_n:~\mbox{Gal}(L_n/K_n) \rightarrow E[p^n]^{r}~:~
\sigma \mapsto ({}^\sigma T_j- T_j)_j
$$
(cf.\ \cite{L}, p.\ 116).
In particular, the degree $[L_n:K_n]$ is equal to a power of $p$. 

We denote the maximal unramified abelian extension of $K_n$ 
by $K_n^{{\rm ur}}$. 
We define the exponent $\kappa_n$ by 
$$
[L_n \cap K_n^{{\rm ur}}:K_n]=p^{\kappa_n}.
$$

Assume that $E$ has multiplicative reduction at $p$ 
and $E$ has multiplicative reduction or potentially good reduction at 
any prime $\ell\ne p$. 

Then the main theorem of this article is the following theorem. 
%
%
%
%Theorem 1.1
%
\begin{Thm}\label{main}
Assume that $G_n\simeq {\rm GL}_2(\Z/p^n\Z)$ for each $n\ge 1$ and $p{\not|}{\rm ord}_p(\Delta)$, where $\Delta$ is the minimal discriminant of $E$.  
The following inequalities hold: 
\begin{enumerate}
\item Assume that $p$ is odd. Then for any $n\ge 1$,  
$$\kappa_n \geq 2n(r-1)-2\sum_{\ell\ne p}\nu_\ell,$$
 where we put
$$
\nu_\ell:=
\left\{
\begin{array}{ll}
\min\{{\rm ord}_p({\rm ord}_\ell(\Delta)),n\}&
\mbox{if $E$ has split multiplicative reduction at $\ell \ne p$}\\
0&\mbox{otherwise}. 
\end{array}
\right.
$$
 
\item  Assume that $p=2$. Then for any $n\ge 1$, 
$$\kappa_n \geq 2n(r-1)-2(r_{2,n}-2)-\delta_2-2\sum_{\ell\ne p}\nu_\ell,$$
where 
$r_{2,n}=1,2$ according as $E(\Q)/E(\Q)\cap [2^n]_E(E(\Q_p))$ 
is cyclic or not, and we put
$$
\nu_\ell:=
\left\{
\begin{array}{ll}
\min\{{\rm ord}_p({\rm ord}_\ell(\Delta)),n\}&
\mbox{if $E$ has split multiplicative reduction at $\ell \ne 2$}\\
1&\mbox{if $E$ has potentially good reduction at $\ell \ne 2$ and $n=1$,}\\
&
\mbox{or if $E$ has non-split multiplicative reduction at $\ell \ne 2$}\\
&
\mbox{and $\mbox{ord}_\ell(\Delta)$ is even,} \\
0&\mbox{otherwise}. 
\end{array}
\right.
$$
and 
$$\delta_2:=\left\{
\begin{array}{cc}
2 & \mbox{if $n=1$ and $r_{2,1}=1$}\\ 
0 & \mbox{otherwise}
\end{array}\right.
$$ 
\end{enumerate}
\end{Thm}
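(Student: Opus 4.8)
The plan is to convert the definition of $\kappa_n$ into a purely group-theoretic comparison and then feed in local information prime by prime. Since $\Phi_n$ embeds ${\rm Gal}(L_n/K_n)$ into the abelian group $E[p^n]^r$, the extension $L_n/K_n$ is abelian of exponent dividing $p^n$, so $L_n\cap K_n^{{\rm ur}}$ is exactly the maximal subextension of $L_n/K_n$ unramified at every place; it is the fixed field of the subgroup $I\subseteq{\rm Gal}(L_n/K_n)$ generated by all the inertia groups $I_\q$ (for $p=2$ one must also include the images of complex conjugations, since $K_n^{{\rm ur}}$ is the everywhere-unramified Hilbert class field). Consequently
$$\kappa_n=\log_p|{\rm Gal}(L_n/K_n)|-\log_p|I|,\qquad \log_p|I|\le\log_p|I_\P|+\sum_{\ell\ne p}\log_p|I_\ell|,$$
where $I_\P$ is generated by the inertia at the primes above $p$ and $I_\ell$ by those above $\ell$. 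It therefore suffices to bound $\log_p|{\rm Gal}(L_n/K_n)|$ from below and each local inertia contribution from above.

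First I would show that $\Phi_n$ is an isomorphism, so that $\log_p|{\rm Gal}(L_n/K_n)|=2nr$. The image of $\Phi_n$ is a $G_n$-submodule of $E[p^n]^r$, and under the hypothesis $G_n\simeq{\rm GL}_2(\Z/p^n\Z)$ the standard module $E[p^n]\simeq(\Z/p^n\Z)^2$ is rigid enough that any $G_n$-submodule of $E[p^n]^r$ is cut out by linear relations among the $r$ coordinates. Since $P_1,\dots,P_r$ are $\Z$-independent modulo torsion, their Kummer classes in $H^1(K_n,E[p^n])$ are independent, and together with the full ${\rm GL}_2$-action this forces the image to fill out all of $E[p^n]^r$. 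This is where the hypothesis $G_n\simeq{\rm GL}_2(\Z/p^n\Z)$ is first essential.

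Next I would bound the inertia at $p$. At a prime $\P\mid p$ the curve has multiplicative reduction, so over $\Q_p$ we have the Tate parametrization $E(\overline{\Q}_p)\simeq\overline{\Q}_p^{\times}/q^{\Z}$ with $v_p(q)={\rm ord}_p(\Delta)$; the localization of $K_n$ already contains $\mu_{p^n}$ together with a $p^n$-th root of $q$, which generate $E[p^n]$ on the Tate curve. Adjoining $[p^n]_E^{-1}P_j$ amounts locally to adjoining $p^n$-th roots of the Tate parameters $u_j\in\Q_p^\times$ of the $P_j$, and by local Kummer theory the inertia at $\P$ injects into the ramified part of these classes. Because $p\nmid{\rm ord}_p(\Delta)$, the valuation $v_p(q)$ is prime to $p$, so the ramified directions are governed by a single copy $\langle\zeta_{p^n},q^{1/p^n}\rangle\simeq E[p^n]$, yielding $\log_p|I_\P|\le 2n$. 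This is precisely the source of the $-2n$ that turns $2nr$ into $2n(r-1)$. For $p=2$ this local computation is more delicate: $\mu_4\not\subset\Q_2$ and the quotient $E(\Q)/E(\Q)\cap[2^n]_E(E(\Q_2))$ may fail to be cyclic, and tracking the precise ramified part (including real places) produces the refined correction terms $r_{2,n}$ and $\delta_2$.

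Finally I would run the local analysis at each $\ell\ne p$. If $E$ has good or (for $p$ odd) potentially good reduction at $\ell$, then by the N\'eron--Ogg--Shafarevich criterion $K_n/\Q$ is unramified at $\ell$ and the globally rational points add no $\ell$-ramification, so $\nu_\ell=0$; the exceptional potentially-good case with $p=2,\ n=1$ contributes the single unit $\nu_\ell=1$. If $E$ has split multiplicative reduction at $\ell$, the Tate curve at $\ell$ gives $v_\ell(q_\ell)={\rm ord}_\ell(\Delta)$, and the inertia at $\ell$ is controlled by the $p$-power roots of $q_\ell$ and of the $u_j$; the ramified part lives in the $p$-part of the Tate parameter, capped by the exponent $p^n$, which gives $\log_p|I_\ell|\le 2\,\min\{{\rm ord}_p({\rm ord}_\ell(\Delta)),n\}=2\nu_\ell$, the factor $2$ again recording the two Kummer directions. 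The non-split case is handled by the quadratic twist and accounts for the parity condition on ${\rm ord}_\ell(\Delta)$ when $p=2$. Assembling these bounds gives $\log_p|I|\le 2n+2\sum_{\ell\ne p}\nu_\ell$ (with the $p=2$ corrections), whence $\kappa_n\ge 2n(r-1)-2\sum_{\ell\ne p}\nu_\ell$, and similarly in the $p=2$ case. I expect the main obstacle to be the local analysis at $p$ via the Tate curve --- in particular pinning down the exact constant $2n$ and, for $p=2$, extracting the corrections $r_{2,n}$ and $\delta_2$ from the coarser structure of $E(\Q_2)/[2^n]_E$ and the real places --- rather than the more routine accounting at the primes $\ell\ne p$.
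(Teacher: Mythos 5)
Your overall architecture --- prove that $\Phi_n$ is an isomorphism so that $[L_n:K_n]=p^{2nr}$, bound the inertia generated by the primes above $p$ by $p^{2n}$ (with corrections for $p=2$) and the inertia above each $\ell\ne p$ by $p^{2\nu_\ell}$, then divide --- is the same as the paper's. But there is a genuine gap at the first and hardest step. You justify surjectivity of $\Phi_n$ by asserting that, since $P_1,\dots,P_r$ are $\Z$-independent modulo torsion, ``their Kummer classes in $H^1(K_n,E[p^n])$ are independent.'' That implication is precisely the theorem to be proved, not an input: points independent over $\Q$ can a priori become $p$-divisible, or dependent modulo $[p^n]_EE(K_n)$, after base change to the large field $K_n$; this is the classical difficulty treated by Bashmakov \cite{bas} and Lang \cite{L}. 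Your submodule classification (every $G_n$-submodule of $E[p^n]^r$ has the form $E[p^n]\otimes W$) is correct, but it only reduces surjectivity of $\Phi_n$ to the injectivity of $A/[p]_EA\to E(K_n)/[p]_EE(K_n)$, which you then assume. The paper's proof of that injectivity is where most of its work lies: for $n=1$ it uses the vanishing $H^1(G_1,E[p])=0$ (Lawson--Wuthrich \cite{LW}, which is where the full ${\rm GL}_2$ hypothesis enters) together with a counting argument via ${\rm End}_{G_1}(E[p])\simeq\Z/p\Z$; the passage to all $n$ goes through $\Phi_\infty$ and Nakayama's lemma and requires the field-theoretic lemma $L_1\cap K_\infty=K_1$. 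For $p=2$ that lemma is itself delicate --- the paper devotes an entire section to it, analyzing the ${\rm GL}_2(\Z/2\Z)$-submodule structure of $M_2(\Z/2\Z)$, the subgroup generated by the squares in $1+2M_2(\Z_2)$, and explicit ramification at $2$ via the Tate curve --- so your plan is missing exactly the ingredient that makes the $p=2$ case of the theorem work.

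Two smaller inaccuracies. For a prime $\ell\ne p$ of potentially good reduction, the N\'eron--Ogg--Shafarevich criterion does not give that $K_n/\Q$ is unramified at $\ell$ (in general it is ramified there); what is needed, and what the paper proves via Raynaud's argument, is that $E$ acquires good reduction over $K_{m_0}$, after which reducing the cocycle values ${}^\sigma T_j-T_j$ modulo a prime above $\ell$ shows that $L_n/K_n$ is unramified above $\ell$, the single exceptional case $(n,p)=(1,2)$ accounting for $\nu_\ell=1$. Also, for $p=2$ the corrections $r_{2,n}$ and $\delta_2$ have nothing to do with real places or complex conjugation: they come from the Tate-curve computation $E(\Q_2)/[2^n]_EE(\Q_2)\simeq\Z/2\Z\times\Z/2^n\Z$, i.e.\ the possible failure of cyclicity of the local points, which can force a second ramified local Kummer generator (which may be taken to be $\zeta_{2^{n+1}}$); the archimedean places play no role in the paper's argument.
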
 

\begin{Cor}
Assume that the conductor of $E$ is equal to $p$. 
Then 
$$
\kappa_n=\left\{ \begin{array}{ll}2n(r-1)+2\nu&(n>\nu)\\
2nr&(n\leq \nu)
\end{array}
\right.
$$
for some integer $\nu\ge 0$ $($which depends only on $P_j)$. 
\label{cor}
\end{Cor}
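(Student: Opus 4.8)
The plan is to pin down $\kappa_n$ exactly by isolating two contributions: the size of the global Kummer image and the size of the inertia at $p$, which is the only ramified prime. Since the conductor of $E$ equals $p$, the curve has good reduction at every $\ell\ne p$, so $K_n/\Q$ is unramified outside $p$; the same holds for $L_n/K_n$, because at a prime $\P$ of $K_n$ over $\ell\ne p$ the map $[p^n]_E$ is \'etale on the good reduction $\tilde E$, whence the division points $T_j$ are unramified there. (In particular $p$ is odd, since there is no elliptic curve of conductor $2,3,5,7$, so it is part (1) of Theorem \ref{main} that is relevant.) Because $\mathrm{Im}\,\Phi_n$ is abelian, $L_n\cap K_n^{\rm ur}$ is the fixed field of the subgroup generated by all inertia groups, and as only the primes over $p$ ramify we obtain
$$
\kappa_n=\log_p\bigl|{\rm Im}\,\Phi_n\bigr|-\log_p\Bigl|\textstyle\sum_{\P\mid p}I_\P\Bigr|,
$$
where $I_\P\subseteq{\rm Gal}(L_n/K_n)\hookrightarrow E[p^n]^{r}$ is the inertia group at $\P$. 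For the first term I would invoke the hypothesis $G_n\simeq{\rm GL}_2(\Z/p^n\Z)$ together with the $\Z$-independence of $P_1,\dots,P_r$ and the Bashmakov--Ribet-type Kummer theory used in the proof of Theorem \ref{main} to conclude that $\Phi_n$ is surjective, so that $\bigl|{\rm Im}\,\Phi_n\bigr|=p^{2nr}$ for every $n\ge 1$.

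The heart of the matter, and the step I expect to be hardest, is the local computation of $I_\P$ at $p$. Working in the completion $K_{n,\P}$ and using the Tate parametrization $E(\overline{\Q}_p)\simeq\overline{\Q}_p^{\times}/q^{\Z}$, one has $q^{1/p^n}\in K_{n,\P}$ because $E[p^n]\subset E(K_n)$; hence the field generated by a single $T_j$ is $K_{n,\P}(u_j^{1/p^n})$, where $u_j\in\Q_p^{\times}$ is the image of $P_j$. The assumption $p\nmid{\rm ord}_p(\Delta)$ is what makes this tractable: it forces the component group at $p$ to have order prime to $p$, so the pro-$p$ part of $E(\Q_p)$ is the formal group $\hat E(p\Z_p)\cong\Z_p$, of rank one, and it lets me absorb the $q$-part and the (prime-to-$p$) Teichm\"uller part so that each $u_j$ is a principal unit. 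Since the images of all the $P_j$ then lie in one cyclic $\Z_p$, the compositum $K_{n,\P}(u_1^{1/p^n},\dots,u_r^{1/p^n})$ is generated by the least divisible point; setting $\nu$ to be the integer with the image of $\langle P_1,\dots,P_r\rangle$ in $E(\Q_p)_{\mathrm{pro}\text{-}p}\cong\Z_p$ equal to $p^{\nu}\Z_p$ (which depends only on the $P_j$), a direct ramification computation for $\hat{\Gm}$ shows this extension is totally ramified of degree $p^{\max(n-\nu,0)}$. Thus $I_\P$ is cyclic of that order, lying in the canonical ($\mu_{p^n}$-)line of $E[p^n]$ attached to $\P$ along the fixed coordinate direction determined by the $P_j$. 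The delicate point is verifying \emph{total} ramification, i.e.\ that no unramified part is hidden; this amounts to checking that a principal unit of $K_{n,\P}$ of the relevant level is not a $p^n$-th power in the maximal unramified extension, which holds because $1+\mathfrak m$ is not $p$-divisible over $\Q_p^{\rm ur}$.

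Finally I would assemble the two terms. The primes $\P\mid p$ are permuted transitively by $G_n$, and since ${\rm GL}_2(\Z/p^n\Z)$ acts transitively on the lines of $E[p^n]$, the canonical lines $\ell_\P$ run over all of them. Therefore the $G_n$-orbit of $I_\P$ generates $p^{\nu}E[p^n]$ inside the single coordinate direction fixed by the $P_j$, so $\bigl|\sum_{\P\mid p}I_\P\bigr|=p^{2\max(n-\nu,0)}$, the factor $2$ being exactly the rank of $E[p^n]$ and the reason the ramified part is independent of $r$. Substituting into the displayed formula yields $\kappa_n=2nr-2\max(n-\nu,0)$, which equals $2nr$ for $n\le\nu$ and $2n(r-1)+2\nu$ for $n>\nu$, as claimed; note that this sharpens the bound of Theorem \ref{main} precisely by the term $2\nu$, arising from the inertia having order $p^{\,n-\nu}$ rather than the cruder $p^{\,n}$.
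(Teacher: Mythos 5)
Your strategy coincides with the paper's own proof: write $\kappa_n=\log_p[L_n:K_n]-\log_p|I|$, observe that conductor $p$ forces $|I|=|I_p|$ and $s=0$, get $[L_n:K_n]=p^{2nr}$ from $\Phi_n$ being an isomorphism, and compute $|I_p|=p^{2\max(n-\nu,0)}$ exactly via the Tate parametrization at $p$ (local inertia cyclic of order $p^{n-\nu}$ and totally ramified, then the $G_n$-conjugation filling out all of $E[p^{n-\nu}]$ — this is Theorem \ref{est1} and the refinement in \S 5). The genuine gap is at the entry point: the Corollary assumes \emph{only} that the conductor equals $p$, whereas you invoke $G_n\simeq{\rm GL}_2(\Z/p^n\Z)$ and $p\nmid{\rm ord}_p(\Delta)$ as hypotheses. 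These must be \emph{deduced} from the prime-conductor assumption, and the deduction is not formal: the paper's proof cites \cite{SY} for $p\ge 11$, $\Delta\mid p^5$ (Mestre--Oesterl\'e \cite{MO}, whence ${\rm ord}_p(\Delta)\le 5<p$), and for the surjectivity $G_n\simeq {\rm GL}_2(\Z/p^n\Z)$, which rests on Mazur-type results for semistable curves. Every step of your argument — the vanishing of $H^1$ behind the surjectivity of $\Phi_n$, the irreducibility of $E[p]$ used to pass from one inertia line to $E[p^{n-\nu}]$, and the very definition of $\nu$ (which needs $p\nmid{\rm ord}_p(q)$) — depends on these facts; your remark that $p$ is odd (no curves of conductor $2,3,5,7$) is only the easy part of this verification.

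A secondary, repairable omission: you treat the reduction at $p$ as split, writing the images $u_j$ of the $P_j$ as elements of $\Q_p^\times$. A prime-conductor curve may be non-split multiplicative at $p$; then the Tate parametrization is Galois-equivariant only over the unramified quadratic extension ${\cal M}$, and the images of the $P_j$ lie in the norm-restricted subgroup $H\subset {\cal M}^\ast$ rather than in $\Q_p^\times$. The paper's \S 3 runs the entire computation over ${\cal M}$ and descends using that $[{\cal M}K_n:K_n]\le 2$ is prime to $p$ (recall $p$ is odd here), so that ${\cal L}_n/{\cal K}_n$ is still totally ramified of degree $p^{\max(n-\nu,0)}$; your proof needs the same detour, after which the structure of $E(\Q_p)$ and the inertia computation you describe go through unchanged.
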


We explain the conditions imposed on $E$. 
Put $$n_0:=
 \left\{
\begin{array}{ll}
 1 & {\rm if}\ p>3 \\
 2 & {\rm if}\ p=3 \\
 3 & {\rm if}\ p=2.
\end{array}\right. $$
It is known that  $G_n\simeq {\rm GL}_2(\Z/p^n\Z)$ for $n\le n_0$ implies 
$G_n\simeq {\rm GL}_2(\Z/p^n\Z)$ for all $n\ge 1$ (cf.\ \S 1 of \cite{DD}). 
Thus the assumption in theorem \ref{main} can bee replaced by 
$G_n\simeq {\rm GL}_2(\Z/p^n\Z)$ for $n \le n_0$. 
For a given prime number $p$, there is a criterion for $G_n$ ($n\le n_0$) 
to be isomorphic to ${\rm GL}_2(\Z/p^n\Z)$ (see \cite{DD} for $p=2$,  
\cite{Elkies} for $p=3$, and \cite{Ser} for $p\ge 5$). 
The condition  $p{\not|}{\rm ord}_p(\Delta)$  is automatically satisfied when $p>5$ and 
$E$ is a semistable  elliptic curve (cf.\ Th\'eor\`eme 1, p.\ 176 of \cite{MO}). 
%The meaning of $\delta_2$ will be revealed in \S 3. 
%
%
%
%
%
\par
\bigskip
While preparing this paper, Hiranouchi \cite{H} generalized Theorem \ref{main} (1) to the case 
where $p>2$, $E(\Q_p)[p]=\{0\}$, $G_1\simeq \mbox{GL}(\Z/p\Z)$. 
He uses the structure theorem of $E(\Q_p)$ and the formal group for 
$E$ which plays a substitution of Tate curves. 
He also shows $E(\Q_p)[p]=\{0\}$ for $p>2$ under the assumption of Theorem \ref{main}. 
\par
\bigskip
The organization of this paper is as follows. 
In \S 2, 
we study the extension $L_n/K_n$. 
To do this we modify Bashmakov's result \cite{bas} from Lang \cite{L} for our elliptic curves. 
Then we investigate the degree $[L_n:K_n]$ of the extension $L_n/K_n$. A key point is to show the equality $L_1\cap K_\infty=K_1$. 
To do this we separate the situation into the case when $p$ is odd and the case when $p=2$. 
The former case will be done in \S 2, but the latter case 
will be devoted to \S 6 because of the particular treatment in which being $p=2$ causes.  
In \S 3 and \S 4, we investigate the degree of the $p$-adic completion of $L_n$ over the one of $K_n$, 
which is used for the estimate of the inertia group in \S 5. 
We give the proof of Theorem \ref{main} in \S 5, 
and we give numerical examples of $\kappa_n$ in \S 7. 

\textbf{Acknowledgments.} We would like to express our deep appreciation to Professors Matsuno Kazuo and Toshiro Hiranouchi, and 
Yoshiyasu Ozeki for comments in our previous version.  We thank also the referee for helpful suggestions and collections which 
are useful for improving contents and presentations. 
\section{The extension $L_n$ over $K_n$}
\label{extension}

In this section, we extend some results in \cite{SY} which has been done by the arguments essentially based on Bashmakov \cite{bas} 
(cf.\ Lemma 1 of \cite[p.\ 117]{L}). 

Let us keep the notation in \S 1 and throughout this paper we assume our elliptic curve $E$ always satisfies the condition in 
Theorem \ref{main}. 
Put $K_\infty:=\ds\cup_{n\ge 1}K_n$, 
$L_\infty:=\ds\cup_{n\ge 1}L_n$ and $G_\infty:={\rm Gal}(K_\infty/\Q)$. 
For each $n\ge 1$ let us consider the $G_n$-homomorphism 
$$\Phi_n:{\rm Gal}(L_n/K_n)\lra E[p^n]^r.$$ 
It follows that the $G_\infty$-homomorphism 
$$\Phi_\infty:=\ds\varprojlim_n \Phi_n:{\rm Gal}(L_\infty/K_\infty)\lra T_p(E)^r$$ 
is injective and  the image is a closed subgroup.
We are concerned with the order of the image of $G_n$-homomorphism $\Phi_n$. 
As we will see below, $\Phi_1$ controls $\Phi_\infty$ and then the information for $\Phi_n$ comes up from $\Phi_\infty$. 

To obtain a lower bound of the class number in question, 
we need to study that the image of $\Phi_n$ to guarantee 
the degree $[L_n:K_n]$ is large enough. 
We will prove that $\Phi_n$ is an isomorphism for any prime $p$ and $n\ge 1$.  
\begin{Thm}\label{160120a}
Assume that $G_1 \simeq {\rm GL}_2(\Z/p\Z)$. 
Then, $\Phi_1$ is an isomorphism for any prime $p$. 
In particular, the equality $[L_1:K_1]= p^{2r}$ holds. 
\end{Thm}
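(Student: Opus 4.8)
The plan is to exploit that $\Phi_1$ is an injective $G_1$-homomorphism into $E[p]^r$ and to force its image to be everything by combining the representation theory of $\mathrm{GL}_2(\F_p)$ with a Kummer-theoretic argument. Since $G_1\simeq\mathrm{GL}_2(\F_p)$ acts on $E[p]\cong\F_p^2$ through the standard representation, $E[p]$ is an absolutely irreducible $\F_p[G_1]$-module, so by Schur's lemma $\mathrm{End}_{G_1}(E[p])=\F_p$ and $E[p]^r=E[p]\otimes_{\F_p}\F_p^r$ is semisimple with submodule lattice isomorphic to the subspace lattice of $\F_p^r$: every $G_1$-submodule is $E[p]\otimes_{\F_p}W$ for some $W\subseteq\F_p^r$. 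The image of $\Phi_1$ is such a submodule, so $\Phi_1$ is surjective iff the corresponding $W$ equals $\F_p^r$; equivalently, iff there is no nonzero $(c_j)\in\F_p^r$ with $\sum_j c_j({}^\sigma T_j-T_j)=0$ for every $\sigma\in\mathrm{Gal}(L_1/K_1)$.

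Next I would translate this into a statement about the $P_j$. A relation $\sum_j c_j({}^\sigma T_j-T_j)=0$ for all $\sigma$ says precisely that $Q:=\sum_j c_j T_j$ lies in $E(K_1)$, and then $[p]_E(Q)=\sum_j c_j P_j\in pE(K_1)$. Thus surjectivity of $\Phi_1$ is equivalent to the images of $P_1,\dots,P_r$ being $\F_p$-linearly independent in $E(K_1)/pE(K_1)$. Because $A$ is a free direct summand of $E(\Q)$ with the $P_j$ as a basis, these points are already independent in $E(\Q)/pE(\Q)$, so it suffices to prove that the restriction map $E(\Q)/pE(\Q)\to E(K_1)/pE(K_1)$ is injective.

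To control its kernel I would use the Kummer exact sequences over $\Q$ and over $K_1$ together with inflation–restriction. Since $K_1=\Q(E[p])$ trivializes the $G_{K_1}$-action on $E[p]$, inflation–restriction identifies the kernel of $H^1(G_\Q,E[p])\to H^1(G_{K_1},E[p])$ with $H^1(G_1,E[p])$, and a routine diagram chase shows that the kernel of $E(\Q)/pE(\Q)\to E(K_1)/pE(K_1)$ injects into $H^1(G_1,E[p])=H^1(\mathrm{GL}_2(\F_p),\F_p^2)$. The whole statement therefore reduces to the vanishing of this group cohomology.

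The final and most delicate step is the vanishing $H^1(\mathrm{GL}_2(\F_p),\F_p^2)=0$ for every prime $p$. For odd $p$ this is immediate: the scalar $-I$ is central and acts as $-1$ on $E[p]$, hence induces simultaneously the identity and multiplication by $-1$ on $H^1(G_1,E[p])$; since $-1\ne 1$ in $\F_p$ this forces the ($p$-torsion) group to vanish. The main obstacle is the prime $p=2$, where $-I=I$ gives no leverage; there I would argue directly, using $\mathrm{GL}_2(\F_2)\cong S_3$ and restriction to a Sylow $2$-subgroup $C_2$, on which $E[p]\cong\F_2^2$ becomes the free module $\F_2[C_2]$ and is thus cohomologically trivial, so that $H^1(S_3,\F_2^2)\hookrightarrow H^1(C_2,\F_2^2)=0$. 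Granting this vanishing, the restriction map is injective, the $P_j$ stay independent modulo $pE(K_1)$, $\Phi_1$ is surjective, and since $\#E[p]^r=p^{2r}$ we conclude $[L_1:K_1]=p^{2r}$.
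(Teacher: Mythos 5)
Your proof is correct, and its two pillars are the same ones the paper leans on: the vanishing of $H^1(G_1,E[p])$ and a Kummer-theoretic link between $A/[p]_EA$ and ${\rm Gal}(L_1/K_1)$, combined with irreducibility of $E[p]$ and Schur's lemma. The differences are in execution, and they are worth recording. First, where you prove $H^1({\rm GL}_2(\F_p),\F_p^2)=0$ from scratch --- via the central element $-I$ for odd $p$, and for $p=2$ via restriction to a Sylow $2$-subgroup of ${\rm GL}_2(\F_2)\simeq S_3$, on which $\F_2^2$ becomes the free module $\F_2[C_2]$ and is cohomologically trivial --- the paper simply cites Lawson--Wuthrich \cite{LW}; your two-pronged argument is valid and makes the proof self-contained, including the delicate prime $p=2$ which is exactly the case the paper's predecessor \cite{SY} had excluded. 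Second, you run the final count dually: you identify the image of $\Phi_1$ with $E[p]\otimes_{\F_p}W$ for a subspace $W\subseteq\F_p^r$ and reduce surjectivity to the $\F_p$-linear independence of the $P_j$ in $E(K_1)/pE(K_1)$, which follows from injectivity of $E(\Q)/pE(\Q)\to E(K_1)/pE(K_1)$ (inflation--restriction plus the vanishing plus injectivity of the Kummer map). The paper instead uses the vanishing to quote an injection $A/[p]_EA\hookrightarrow{\rm Hom}_{G_1}({\rm Gal}(L_1/K_1),E[p])$ from \cite{SY} and compares cardinalities: writing ${\rm Gal}(L_1/K_1)\simeq E[p]^s$ with $s\le r$, Schur's lemma gives ${\rm Hom}_{G_1}({\rm Gal}(L_1/K_1),E[p])\simeq(\Z/p\Z)^s$, forcing $s\ge r$ and hence $s=r$. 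These are two formulations of one argument (your functional $(c_j)$ killing the image is precisely a nontrivial element of the cokernel of the paper's counting); yours buys self-containedness and makes the role of the Mordell--Weil points explicit, while the paper's is shorter because it outsources both the cohomology and the Kummer step to references.
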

\begin{proof}
The proof given here is almost identical with the proof of Theorem 2.4 in \cite{SY}. 
Therefore we only explain a key point.  
Since $G_1\simeq {\rm GL}_2(\Z/p\Z)$, the Galois cohomology $H^1(G_1,E[p])$ vanishes by \cite{LW}. 
Then there is an injective homomorphism 
$$A/[p]_EA\hookrightarrow {\rm Hom}_{G_1}({\rm Gal}(L_1/K_1),E[p])$$
(see l.\ 6 in p.\ 283 of \cite{SY}).
Therefore we have $\sharp {\rm Hom}_{G_1}({\rm Gal}(L_1/K_1),E[p])\ge p^r$, 
where $r$ is $\Z$-rank of $A$. 
On the other hand ${\rm Gal}(L_1/K_1)\simeq E[p]^s$ for some $s\le r$. Then we see that 
$${\rm Hom}_{G_1}({\rm Gal}(L_1/K_1),E[p])\simeq {\rm End}_{G_1}(E[p])^s\simeq (\Z/p\Z)^s$$
which implies $s\ge r$. Hence $s=r$ and it turns that $\Phi_1$ is 
an isomorphism. 
\end{proof}
Theorem \ref{160120a} is different from Theorem 2.4 of 
\cite{SY} at the point where we omit the assumption that $N$ is prime 
and $p>2$. 

To show that $\Phi_n$ is an isomorphism, we have the following lemma. 
\begin{Lem}\label{any-case}
Assume that $G_n\simeq {\rm GL}_2(\Z/p^n\Z)$ for $n \ge 1$. 
Then, 
the equality $L_1\cap K_\infty=K_1$ holds for any prime $p$.  
\end{Lem}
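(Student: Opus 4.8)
The plan is to realize $M:=L_1\cap K_\infty$ as a Galois module and prove it is trivial. Since $P_1,\dots,P_r\in E(\Q)$ are rational, every $\sigma\in\GQ$ sends each $T_j$ into $[p]_E^{-1}(P_j)=T_j+E[p]\subseteq L_1$, so $L_1/\Q$ is Galois; as $K_\infty/\Q$ is Galois, $M/\Q$ is Galois and $M/K_1$ is a subextension of the abelian extension $L_1/K_1$. By Theorem \ref{160120a}, $\mathrm{Gal}(L_1/K_1)\cong E[p]^r$ as a $G_1$-module, and $E[p]$ is an irreducible $\mathrm{GL}_2(\F_p)$-module for \emph{every} prime $p$ (the group acts transitively on the nonzero vectors, so there is no stable line). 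Hence $E[p]^r$ is semisimple and the $G_1$-quotient $\mathrm{Gal}(M/K_1)$ is isomorphic to $E[p]^s$ for some $0\le s\le r$; the assertion is equivalent to $s=0$. Because $M/K_1$ has exponent $p$, the field $M$ is fixed by the Frattini subgroup $\Phi(\Gamma_1)$ of $\Gamma_1:=\mathrm{Gal}(K_\infty/K_1)$, so $\mathrm{Gal}(M/K_1)$ is a $G_1$-equivariant quotient of $\Gamma_1/\Phi(\Gamma_1)$, whose top graded piece is $\Gamma_1/\Gamma_2\cong M_2(\F_p)$ with $G_1$ acting by conjugation (the adjoint representation); here $\Gamma_n$ denotes the kernel of the reduction map $\mathrm{GL}_2(\Z_p)\to\mathrm{GL}_2(\Z/p^n\Z)$.

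For $p$ odd one has $\Gamma_1^p,[\Gamma_1,\Gamma_1]\subseteq\Gamma_2$ and $\Gamma_1/\Gamma_2$ is elementary abelian, so $\Phi(\Gamma_1)=\Gamma_2$ and $\mathrm{Gal}(M/K_1)$ is a quotient of the adjoint module $M_2(\F_p)=\F_p\cdot I\oplus\mathfrak{sl}_2(\F_p)$. Its composition factors are the trivial module and the symmetric square $\mathfrak{sl}_2(\F_p)\cong\mathrm{Sym}^2(E[p])\otimes\det^{-1}$ (irreducible of dimension $3$, since $2\le p-1$); in particular the $2$-dimensional standard module $E[p]$ does not occur among them. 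As $\mathrm{Gal}(M/K_1)\cong E[p]^s$ has only standard composition factors, this forces $s=0$, i.e.\ $M=K_1$.

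For $p=2$ the representation-theoretic obstruction disappears: over $G_1=\mathrm{GL}_2(\F_2)\cong S_3$ the standard module $E[2]$ does occur as a quotient of the adjoint module $M_2(\F_2)$ (for instance $M_2(\F_2)/N\cong E[2]$ for the $2$-dimensional submodule $N=\langle I,\;E_{11}+E_{12}+E_{21}\rangle$), so $s\ge 1$ cannot be excluded abstractly and the argument must be arithmetic. The plan is to work locally at $2$. If $s\ge 1$, projecting $\mathrm{Gal}(M/K_1)$ onto one standard summand and transporting it through $\Phi_1$ produces a point $T\in E(K_\infty)$ with $[2]_ET=P$ for some $P\in A\setminus 2A$, and with $K_1(T)\subseteq K_\infty$ and $[K_1(T):K_1]=4$. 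Fix a place $v\mid 2$ of $K_1$; since $E$ has multiplicative reduction at $2$, over $K_{1,v}$ the curve is a Tate curve with parameter $q$ satisfying $\mathrm{ord}_2(q)=\mathrm{ord}_2(\Delta)$, which is odd by hypothesis, and $K_{\infty,v}=K_{1,v}(\zeta_{2^\infty},q^{1/2^\infty})$. Writing the Tate parameter of $P$ as $u\in K_{1,v}^\times$, the containment $K_1(T)\subseteq K_\infty$ forces the square class of $u$ to lie in the subgroup of $K_{1,v}^\times/(K_{1,v}^\times)^2$ cut out by the elementary $2$-abelian subextensions of $K_{\infty,v}/K_{1,v}$, which is generated by the cyclotomic class and the class of $q$. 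The main step is to show this cannot happen: because $\mathrm{ord}_2(q)$ is odd and $P$ has infinite order and is independent of the torsion, the square class of $u$ is independent of these, a contradiction, whence $s=0$. This comparison of the ramification of the Kummer extension $L_1/K_1$ with that of the cyclotomic–Tate extension $K_\infty/K_1$ at $v\mid 2$ is the main obstacle — the ``particular treatment'' that $p=2$ forces — and is the computation completed in \S 6.
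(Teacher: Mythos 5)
Your reduction and the odd-$p$ half are sound, and they coincide in substance with the argument the paper invokes (Lemmas 2.1 and 2.2 of \cite{SY}, cited in the paper's proof of Lemma \ref{any-case}): for $p>2$ one has $\Phi(\Gamma_1)=\Gamma_2$, and the adjoint module $M_2(\F_p)=\F_p I\oplus \mathfrak{sl}_2(\F_p)$ has only the trivial module and the three-dimensional $\mathrm{Sym}^2(E[p])\otimes\det^{-1}$ as composition factors, so no quotient can be a copy of $E[p]$. For $p=2$ your structural observations are also correct as far as they go --- your kernel $N=\langle I,\,E_{11}+E_{12}+E_{21}\rangle$ is exactly the paper's $V_2^{(1)}$ of Lemma \ref{submodules} --- but the arithmetic step you propose to finish with does not work.

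The claimed contradiction, that the square class of the Tate parameter $u$ of $P$ must be ``independent'' of the cyclotomic class and the class of $q$ because $P$ has infinite order and is independent of the torsion, is a non sequitur: global independence in $E(\Q)$ imposes no constraint on the image of $P$ in the local group $H/\langle H^2,q\rangle$. By the paper's own computations (\ref{1018a}), (\ref{1018b}), this group is $\langle -1\rangle\times\langle 5\rangle$ (resp.\ $\langle -1\rangle\times\langle \varepsilon^2\rangle$), so the class of $u$ may perfectly well be trivial ($P$ locally $2$-divisible, making the local containment vacuous) or $-1$, in which case ${\cal K}_1(T)\subseteq{\cal K}_1(\zeta_4)\subseteq{\cal K}_2$ lies inside the completion of $K_\infty$ and no contradiction can be extracted at $v\mid 2$ from the Kummer class alone. (Also, $q$ is already a square in ${\cal K}_1=\Q_2(\sqrt{q})$; the relevant class for the quadratic subextensions of $K_{\infty,v}/K_{1,v}$ is that of $\sqrt{q}$, together with the unramified unit class.) The paper's actual $p=2$ argument (\S 6, Theorem \ref{p=2}) runs in the opposite direction: first $N_1\subseteq K_2$ is proved by a Frattini-type computation with ${\mathcal H}=\langle H_1^2\rangle$, which for $p=2$ is strictly smaller than $H_2$ (indeed $[H_2:{\mathcal H}]=2$), so irreducibility of $E[2]$ is needed to rule out the intermediate case (Lemma \ref{even-case2}); next, Lemma \ref{submodules} shows $V_2^{(1)}$ is the unique submodule of $M_2(\F_2)$ with quotient isomorphic to $E[2]$, so a nontrivial $N_1$ would have to be the specific fixed field of $V_2^{(1)}$ in $K_2$; finally, from ${\cal K}_2={\cal M}(\sqrt[4]{q},\zeta_4)$ the inertia at $2$ in ${\rm Gal}(K_2/K_1)$ is generated by $1+2E_{22}$ and $1+2E_{21}$, which span a complement of $V_2^{(1)}$, so that fixed field is totally ramified of degree $4$ over $K_1$ at $2$ (Lemma \ref{1018c}), whereas ${\cal N}_1\subseteq{\cal L}_1\subseteq{\cal M}(\sqrt{q},\zeta_4)$ has ramification index at most $2$. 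The contradiction is thus between the large ramification of the cyclotomic--Tate tower at the canonical candidate field and the small ramification of the Kummer tower; your plan is missing the two steps $N_1\subseteq K_2$ and ${\rm Gal}(K_2/N_1)=V_2^{(1)}$ that pin $N_1$ down and make this ramification comparison possible.
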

\begin{proof}
In case when $p$ is odd prime 
the assertion follows from Lemmas 2.1 and 2.2 of \cite{SY}. 
In case when $p=2$ it follows from Theorem \ref{p=2}. 
\end{proof}
\begin{Thm}\label{0813a}
Assume that $G_n\simeq {\rm GL}_2(\Z/p^n\Z)$ for $n \ge 1$. 
Then, $\Phi_n$ is an isomorphism for $n\ge 1$ and any prime $p$. 
In particular, the equality $[L_n:K_n]= p^{2nr}$ holds. 
\end{Thm}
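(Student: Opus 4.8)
The plan is to deduce the finite-level statement from the behaviour of the inverse limit $\Phi_\infty$, which the text records as an injective $G_\infty$-homomorphism $\Phi_\infty\colon {\rm Gal}(L_\infty/K_\infty)\to T_p(E)^r$ with closed image. Write $M:=\mathrm{Im}(\Phi_\infty)$; this is a closed $\Z_p$-submodule of the finitely generated $\Z_p$-module $T_p(E)^r\cong\Z_p^{2r}$. By (topological) Nakayama's lemma it then suffices to show that $M$ surjects onto $(T_p(E)/pT_p(E))^r=E[p]^r$, that is, that the reduction of $\Phi_\infty$ modulo $p$ is surjective.

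First I would identify this mod-$p$ reduction with $\Phi_1$. Choose the points as a compatible system $T_j=(T_j^{(m)})_m$ with $[p]_E T_j^{(m)}=T_j^{(m-1)}$ and $T_j^{(0)}=P_j$, so that $T_j^{(n)}$ is the level-$n$ point and $[p^n]_E T_j^{(n)}=P_j$. Since $L_1=K_1(T_1^{(1)},\ldots,T_r^{(1)})\subseteq L_\infty$ and, by Lemma \ref{any-case}, $L_1\cap K_\infty=K_1$, restriction gives a surjection ${\rm res}\colon {\rm Gal}(L_\infty/K_\infty)\twoheadrightarrow {\rm Gal}(L_1K_\infty/K_\infty)\cong {\rm Gal}(L_1/K_1)$. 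The first coordinate of $\Phi_\infty(\sigma)=({}^\sigma T_j-T_j)_j$ is $({}^\sigma T_j^{(1)}-T_j^{(1)})_j=\Phi_1({\rm res}(\sigma))$, so the reduction of $\Phi_\infty$ modulo $p$ equals $\Phi_1\circ{\rm res}$. As ${\rm res}$ is surjective and $\Phi_1$ is an isomorphism onto $E[p]^r$ by Theorem \ref{160120a}, this composite is surjective, whence $M=T_p(E)^r$ and $\Phi_\infty$ is an isomorphism.

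Finally I would descend to level $n$. Reducing $\Phi_\infty$ modulo $p^n$ and using its surjectivity, the kernel of $\sigma\mapsto ({}^\sigma T_j^{(n)}-T_j^{(n)})_j$ is exactly ${\rm Gal}(L_\infty/L_nK_\infty)$, so this induces an isomorphism ${\rm Gal}(L_nK_\infty/K_\infty)\xrightarrow{\ \sim\ }E[p^n]^r$ and hence $[L_n:L_n\cap K_\infty]=[L_nK_\infty:K_\infty]=p^{2nr}$. On the other hand $\Phi_n$ is injective, giving $[L_n:K_n]\le p^{2nr}$, while $K_n\subseteq L_n\cap K_\infty$ forces $[L_n:K_n]\ge [L_n:L_n\cap K_\infty]=p^{2nr}$. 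Both inequalities are therefore equalities: $L_n\cap K_\infty=K_n$, $[L_n:K_n]=p^{2nr}$, and $\Phi_n$, an injection between groups of the same finite order, is an isomorphism.

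The main obstacle is the reduction step: one must check cleanly that the mod-$p$ reduction of $\Phi_\infty$ is genuinely $\Phi_1\circ{\rm res}$, so that the compatible choice of division points and the Galois identifications match, and that Nakayama's lemma legitimately applies to the closed submodule $M$ — here the compactness of $M$ together with finite generation of $T_p(E)^r$ over the local ring $\Z_p$ is exactly what is required. Everything else is formal once Theorem \ref{160120a} and Lemma \ref{any-case} are available.
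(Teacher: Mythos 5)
Your proof is correct and follows essentially the same route as the paper: both use Lemma \ref{any-case} and Theorem \ref{160120a} to identify the mod-$p$ reduction of $\Phi_\infty$ with $\Phi_1$ composed with the (surjective) restriction map, apply Nakayama's lemma to conclude that $\Phi_\infty$ is surjective, and then reduce modulo $p^n$ to descend to level $n$. The only cosmetic difference is that you finish with a degree count (recovering $L_n\cap K_\infty=K_n$, the paper's Corollary \ref{160120b}, as a by-product), whereas the paper concludes directly that the restriction of $\Phi_n$ to ${\rm Gal}(L_n/L_n\cap K_\infty)$ is surjective and hence that $\Phi_n$ is surjective.
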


\begin{proof}By Lemma \ref{any-case},  ${\rm Gal}(L_1/L_1\cap K_\infty)
={\rm Gal}(L_1/K_1)$. Then we have the following commutative diagram
$$
\begin{CD}
{\rm Gal}(L_\infty/K_\infty)  @> \Phi_\infty >>  T_p(E)^r  \\
  @V \alpha_1 VV @V \beta_1 VV @.\\
 {\rm Gal}(L_1/K_1)  @>\Phi_1 >> E[p]^r, 
\end{CD}
$$
where $\alpha_1$ is the restriction map and $\beta_1$ is the reduction modulo $p$. Clearly these vertical arrows are surjective. 

Since $\Phi_1$ is an isomorphism by Theorem \ref{160120a}, $\Phi_1\circ \alpha_1$ is surjective. We see that $\Phi_\infty$ is surjective by using 
Nakayama's Lemma. 
This gives rise to the commutative diagram
$$
\begin{CD}
{\rm Gal}(L_\infty/K_\infty)   @> \Phi_\infty \atop \simeq>>  T_p(E)^r  \\
  @V \alpha_n VV @V \beta_n VV @.\\
 {\rm Gal}(L_n/L_n\cap K_\infty)  @>\Phi_n >> E[p^n]^r,
\end{CD}
$$
where $\alpha_n$ is the restriction map and $\beta_n$ is the reduction modulo $p^n$. 
Thus it follows that the restriction of $\Phi_n$ to ${\rm Gal}(L_n/L_n\cap K_\infty)$ is surjective and thus $\Phi_n$ is surjective. 
Since $\Phi_n$ is also injective, $\Phi_n$ is an isomorphism. 
Hence $[L_n:K_n]=p^{2nr}$.  
\end{proof}
\begin{Cor}\label{160120b}
The equality $L_n\cap K_\infty=K_n$ holds for $n\ge 1$. 
\end{Cor}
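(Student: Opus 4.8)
The plan is to deduce this directly from Theorem \ref{0813a} by a Galois-correspondence argument, so almost all the substantive work is already in place. First I would record the tower of fields
$$
K_n \subseteq L_n \cap K_\infty \subseteq L_n,
$$
which exhibits $\mathrm{Gal}(L_n/(L_n\cap K_\infty))$ as a subgroup of $\mathrm{Gal}(L_n/K_n)$. By the fundamental theorem of Galois theory, proving $L_n\cap K_\infty = K_n$ is equivalent to proving the equality of groups $\mathrm{Gal}(L_n/(L_n\cap K_\infty)) = \mathrm{Gal}(L_n/K_n)$, so I would reduce to that.

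Next I would invoke the second commutative diagram appearing in the proof of Theorem \ref{0813a}. There it was shown that the restriction of $\Phi_n$ to $\mathrm{Gal}(L_n/(L_n\cap K_\infty))$ is surjective onto $E[p^n]^r$; this is exactly the conclusion that $\Phi_n\circ\alpha_n$ is onto. In particular the subgroup $\mathrm{Gal}(L_n/(L_n\cap K_\infty))$ has order at least $\sharp E[p^n]^r = p^{2nr}$. On the other hand, Theorem \ref{0813a} asserts that $\Phi_n : \mathrm{Gal}(L_n/K_n)\to E[p^n]^r$ is an isomorphism, whence $\sharp\,\mathrm{Gal}(L_n/K_n) = p^{2nr}$.

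Comparing these two orders finishes the argument: a subgroup of $\mathrm{Gal}(L_n/K_n)$ of order at least $p^{2nr}$ inside a group of order exactly $p^{2nr}$ must be the whole group, so $\mathrm{Gal}(L_n/(L_n\cap K_\infty)) = \mathrm{Gal}(L_n/K_n)$ and therefore $L_n\cap K_\infty = K_n$. There is no genuine obstacle here beyond this bookkeeping; the only point requiring care is to make sure the surjectivity statement one quotes is the restriction to $\mathrm{Gal}(L_n/(L_n\cap K_\infty))$ rather than to the a priori smaller image, but that is precisely what the diagram in Theorem \ref{0813a} provides.
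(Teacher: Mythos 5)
Your proposal is correct and follows essentially the same route as the paper: both extract from the proof of Theorem \ref{0813a} the fact that the restriction of $\Phi_n$ to ${\rm Gal}(L_n/L_n\cap K_\infty)$ already surjects onto $E[p^n]^r$, and combine this with the isomorphism $\Phi_n:{\rm Gal}(L_n/K_n)\simeq E[p^n]^r$ to force ${\rm Gal}(L_n/L_n\cap K_\infty)={\rm Gal}(L_n/K_n)$. Your order-counting phrasing is just a slightly more explicit version of the paper's observation that both maps are isomorphisms onto the same group.
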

\begin{proof}
In the proof of Theorem \ref{0813a}, we saw that $\Phi_n$ and its 
restriction to ${\rm Gal}(L_n/L_n\cap K_\infty)$ are isomorphisms 
to $E[p^n]^r$. 
Thus we have ${\rm Gal}(L_n/L_n\cap K_\infty)={\rm Gal}(L_n/K_n)$, 
and the assertion follows. 
\end{proof}
\section{The inertia subgroups of ${\rm Gal}(L_n/K_n)$ on $p$}
\label{inertia}
In this section we estimate the order of the inertia subgroups of 
 ${\rm Gal}(L_n/K_n)$ on $p$. 
We also improve the previous result (cf.\ Theorem 1.1 of \cite{SY}). 
\subsection{The local case}

Let us recall the notation in \S 3 of \cite{SY}. 
Fix a natural number $n$. 
Put ${\cal K}_n:=\Q_p(E[p^n])$ 
and let $\p$ be the prime ideal of ${\cal K}_n$. 
Put ${\cal L}_n:={\cal K}_n([p^n]_E^{-1}A)$ and 
let $\P$ be the prime ideal of ${\cal L}_n$. 

We will investigate the order of the inertia subgroup ${\cal I}_n$ of 
 ${\rm Gal}({\cal L}_n/{\cal K}_n)$. 

We denote the generators of $A$ by $P_1,\ldots,P_r$, 
where $A$ is the fixed free subgroup in $E(\Q)$. 
For each $j$ in $\{1,\ldots,r\}$ we take $T_j$ 
such that $[p^n]_E(T_j)=P_j$. 
The injectivity of the homomorphism 
$$
\Phi^{{\rm loc}}_n:~\mbox{Gal}({\cal L}_n/{\cal K}_n) \rightarrow E[p^n]^{r}~:~
\sigma \mapsto ({}^\sigma T_j- T_j)_j
$$
shows that $[{\cal L}_n:{\cal K}_n]$ divides $p^{2nr}$. 

A key point is to prove the cyclicity of ${\cal I}_n$ and 
we make use of the Tate curves to confirm it. 
Since $E$ has multiplicative reduction at $p$, 
there exists $q$ in $p\Z_p$ such that $E$ is isomorphic over ${\cal M}$ to the Tate curve $E_q$ for some unramified extension ${\cal M}$ over $\Q_p$ of degree at most two (cf.\ \cite{sil}, p.\ 357, Theorem 14.1). 
We note $E_q(\overline{\Q}_p)\simeq \overline{\Q}_p^\ast/q^{\Z}$. 

We write $\varphi$ from $E$ to $E_q$ for the isomorphism over ${\cal M}$.  
We define $p_j$ and $t_j$ in $E_q(\bQ_p)$ by 
$$
\varphi(P_j)=p_j\quad \mbox{and}\quad \varphi(T_j)=t_j \quad (1\le j \le r)
$$
(see \S 1 for $P_j$ and $T_j$). 

Assume that $p\nmid \mbox{ord}_p(q)$. 
We have ${\cal MK}_n={\cal M}(\zeta_{p^n},q^{\frac{1}{p^n}})$. 
We discuss about generators of ${\cal L}_n/{\cal K}_n$. 

We put 
$$
H:=\left\{
\begin{array}{ll}
\Q_p^\ast & \mbox{if}~~{\cal M}=\Q_p\\
\{x\in {\cal M}^\ast ~|~N_{{\cal M}/\Q_p}(x)\in q^\Z\}&
  \mbox{if}~~[{\cal M}:\Q_p]=2. 
\end{array}
\right.
$$
Then we have $q \in H$ and 
$E(\Q_p)\simeq H/q^{\Z}$ via $\varphi$ 
(cf.\ \cite{sil}, p.\ 357, Theorem 14.1). 
We have
$$
E(\Q_p)/[p^n]_{E}(E(\Q_p))\simeq H/
\langle H^{p^n}, q\rangle. 
$$
\subsubsection{}
We consider the case of ${\cal M}=\Q_p$. 
Then $H=\Q_p^\ast$ and 
$$
H=
\left\{
\begin{array}{ll}
\langle p \rangle \times (\Z/p\Z)^\ast \times (1+p\Z_p) 
& \mbox{if $p\ne 2$}\\
\langle 2 \rangle \times (\Z/4\Z)^\ast \times (1+4\Z_p) 
& \mbox{if $p=2$.}
\end{array}
\right.
$$
It follows from $p\nmid \mbox{ord}_p(q)$ that 
\begin{equation}
H/
\langle H^{p^n}, q\rangle=
\left\{
\begin{array}{ll}
\langle 1+p \rangle 
\simeq \Z/p^n\Z
& \mbox{if $p\ne 2$}\\
\langle -1 \rangle \times \langle 5 \rangle \simeq 
\Z/2\Z\times \Z/2^n\Z
& \mbox{if $p=2$.}
\end{array}
\right.
\label{1018a}
\end{equation}
Hence $E(\Q_p)/[p^n]_E(E(\Q_p))$ is 
an abelian group of type $(p^{n})$, $(2^{n},2)$. 
\par
\bigskip
We discuss generators of 
the image of the projection from 
the subgroup 
$\langle p_1,\ldots,p_r,q\rangle/q^{\Z}$ 
to $H/\langle H^{p^n},q\rangle$. 

We first consider the case of $p>2$. 

Since $\Z/p^n\Z$ is a local ring, there is a relation of inclusion between 
every two submodules of $\Z/p^n\Z$. 
By renumbering, we may assume 
$\langle p_j\rangle \subset \langle p_1\rangle $ 
as a subgroup of $H/\langle H^{p^n}, q\rangle$ 
for each $j\leq r$. 

In this case ${\cal L}_n={\cal K}_n(t_1)$ holds. 
\par
\bigskip
Secondly we consider the case of $p=2$. 

Since $\Z/2^n\Z$ is a local ring, there is a relation of inclusion between 
every two submodules of $\Z/2^n\Z$. 
By renumbering, we may assume 
$\langle p_j,-1\rangle \subset \langle p_1,-1\rangle $ 
as a subgroup of $H/\langle H^{2^n}, q\rangle$ 
for each $j\leq r$. 
If the rank of $\langle p_1,\ldots,p_r \rangle$ is two, 
we may assume $p_2 \notin \langle p_1\rangle$. 
Then we have $p_2=-p_1^k$. 
By replacing $p_2$ by $p_2p_1^{-k}$, we may assume $p_2=-1$. 

In this case ${\cal L}_n={\cal K}_n(t_1)$ 
or ${\cal L}_n={\cal K}_n(t_1,\zeta_{2^{n+1}})$ holds.  
\subsubsection{}
We consider the case of $[{\cal M}:\Q_p]=2$. 
Then 
$$
H:=\{x\in {\cal M}^\ast ~|~N_{{\cal M}/\Q_p}(x)\in q^\Z\} 
$$
and we investigate the structure of $H/H^{p^n}$. 

Since $N_{{\cal M}/\Q_p}(q)=q^2$, 
the image of $H$ via $N_{{\cal M}/\Q_p}$ is 
a subgroup in $q^\Z$ of exponent 1 or 2. 
Thus $H$ contains the group $H_0=\langle q \rangle \times U_{{\cal M},1}$ as 
a subgroup of exponent 1 or 2, 
where $U_{{\cal M},1}$ is the subgroup of ${\cal M}^\ast$ with norm 1. 
\par
\bigskip
We first consider the case of $p>2$. 
Since the exponent $[H:H_0]$ is prime to $p^n$, we have 
$$
H/H^{p^n}\simeq 
H_0/H_0^{p^n}\simeq 
\langle q \rangle \times U_{{\cal M},1}/U_{{\cal M},1}^{p^n}.
$$

We investigate generators of $U_{{\cal M},1}/U_{{\cal M},1}^{p^n}$. 
We denote the ring of integers in $\cal M$ by $\cal O$. 
$$
\log~:~1+p{\cal O}\rightarrow p{\cal O}~:~1+x\mapsto \log(1+x)
$$
converges and it gives an isomorphism. 
Since $\log(1+x)$ is a powerseries with coefficients in $\Q_p$, 
it commutes with the action of $\mbox{Gal}({\cal M}/\Q_p)$. 
Specially each element of $1+p{\cal O}$ with norm one 
corresponds to that of $p{\cal O}$ with trace zero. 

We put ${\cal M}:=\Q_p(\sqrt{D})$ for a square-free integer $D$ in $\Z_p$. 
Then we have 
$$
p{\cal O}\cap \ker Tr_{{\cal M}/\Q_p}=p\Z_p\sqrt{D}
$$
and 
$$
(1+p{\cal O})\cap \ker N_{{\cal M}/\Q_p}=\exp(p\Z_p\sqrt{D}). 
$$
Since ${\cal O}^\ast\simeq ({\cal O}/p{\cal O})^\ast\times (1+p{\cal O})$ 
and  
the order of 
$({\cal O}/p{\cal O})^\ast$ 
is prime to $p$, 
$$
U_{{\cal M},1}/U_{{\cal M},1}^{p^n}=\langle \exp(p\sqrt{D})\rangle\simeq 
\Z/p^n\Z. 
$$
We have 
$$
H/\langle H^{p^n},q\rangle\simeq 
\langle \exp(p\sqrt{D})\rangle\simeq 
\Z/p^n\Z. 
$$
By a similar discussion as in the case of ${\cal M}=\Q_p$, 
we may assume $\langle p_j\rangle \subset \langle p_1\rangle $ 
as a subgroup of $H/\langle H^{p^n}, q\rangle$ 
for each $j\leq r$. 

In this case ${\cal ML}_n={\cal MK}_n(t_1)$ holds. 
\par
\bigskip
Secondly we consider the case of $p=2$. 

We have $N_{{\cal M}/\Q_2}{\cal M}^\ast=\langle 2^2 \rangle \times U_{\Q_2}$. 
It follows from the assumption $2\nmid \mbox{ord}_2(\Delta)$ 
that $2\nmid \mbox{ord}_2(q)$. Thus 
there does not exist $y$ in ${\cal M}$ such that 
$N_{{\cal M}/\Q_2}(y)=q$.  
Thus we have 
$N_{{\cal M}/\Q_2}(x)\in q^{\Z}$ if and only if 
$N_{{\cal M}/\Q_2}(x)\in q^{2\Z}$. 

Since $N_{{\cal M}/\Q_2}(q)=q^2$, we have
\begin{equation}
H=q^{\Z}\times U_{{\cal M},1} 
\label{1103a}
\end{equation}
and
$$
H/H^{2^n}\simeq \langle q \rangle \times U_{{\cal M},1}/U_{{\cal M},1}^{2^n}.
$$

We investigate generators of $U_{{\cal M},1}/U_{{\cal M},1}^{2^n}$.
$$
\log~:~1+4{\cal O}\rightarrow 4{\cal O}~:~1+x\mapsto \log(1+x)
$$
converges and it gives an isomorphism. 
We modify discussion in the case of $p>2$. 
Since 
$$
1 \rightarrow 1+4{\cal O}\rightarrow {\cal O}^\ast \rightarrow 
({\cal O}/4{\cal O})^\ast\rightarrow 1 
$$
and
$$
(\O/4\O)^\ast=\langle \sqrt{5} \rangle\times \mu_6\simeq \Z/2\Z\times \Z/6\Z,
$$
we have
$$
1 \rightarrow (1+4{\cal O})\cap U_{{\cal M},1}\rightarrow 
U_{{\cal M},1} \rightarrow 
\mu_6 \rightarrow 1, 
$$
where $\mu_6$ is the group of 6-th roots of unity. 
Thus we have 
$$
U_{{\cal M},1}=\mu_6 \times  \langle \exp(4\sqrt{5})\rangle. 
$$

Let $U_{{\cal M},\pm 1}$ be the subgroup of ${\cal M}^\ast$ with norm 
$\pm 1$. 
Then the norm mapping induces the exact sequence 
$$
1\rightarrow U_{{\cal M},1}\rightarrow U_{{\cal M},\pm 1} \rightarrow 
\langle -1 \rangle \rightarrow 1. 
$$
We note $\varepsilon:=(-1+\sqrt{5})/2$ has norm $-1$. 
Since $\varepsilon^6=1+4(-2\varepsilon+1)$, 
there exists a unit $w$ such that $\varepsilon^6=\exp(4w\sqrt{5})$. 
Since $3$ is an unit, we have $\varepsilon^2=\eta \exp(4w\sqrt{5}/3)$ 
for some $\eta$ in $\mu_3$. 
Thus we have
\begin{equation}
U_{{\cal M},1}=\mu_6 \times  \langle \varepsilon^2 \rangle. 
\label{1103b}
\end{equation}
We also have
\begin{equation}
H/\langle H^{2^n}, q\rangle =
\langle -1 \rangle \times \langle \varepsilon^2
\rangle 
\simeq \Z/2\Z\times \Z/2^n\Z. 
\label{1018b}
\end{equation}
By a similar discussion as in the case of ${\cal M}=\Q_p$, 
we may assume 
$\langle p_j,-1\rangle \subset \langle p_1,-1\rangle $ 
as a subgroup of $\langle H, q\rangle/\langle H^{2^n}, q\rangle$ 
for each $j\leq r$. 
If the rank of $\langle p_1,\ldots,p_r \rangle$ is two, 
we may assume $p_2=-1$. 

In this case ${\cal ML}_n={\cal MK}_n(t_1)$ or 
${\cal ML}_n={\cal MK}_n(t_1,\zeta_{2^{n+1}})$ holds.  
\subsubsection{}

To sum up our discussion, we have the following proposition. 

\begin{Prop}
Assume that $p\nmid {\rm ord}_p(q)$. 
For a suitable change of basis of a maximal free subgroup $A$ of $E(\Q)$, 
the equation 
${\cal ML}_n={\cal MK}_n(\varphi(T_1))$ or 
${\cal ML}_n={\cal MK}_n(\varphi(T_1),\zeta_{2^{n+1}})$ 
holds. The latter case occurs only when $p=2$, 
and then we may assume $\varphi(P_2)=-1$, 
$\varphi(T_2)=\zeta_{2^{n+1}}$. 
\label{cyclic-imp}
\end{Prop}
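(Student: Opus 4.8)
The plan is to assemble the four cases treated in the preceding paragraphs into the single statement, the common mechanism being that adjoining the radical $t_j=\varphi(T_j)$ to ${\cal MK}_n$ is governed entirely by the class of $p_j=\varphi(P_j)$ in $H/\langle H^{p^n},q\rangle$. First I would record that, since $L_n=K_n(T_1,\ldots,T_r)$ and $\varphi$ is an isomorphism over ${\cal M}$, we have ${\cal ML}_n={\cal MK}_n(t_1,\ldots,t_r)$. The key reduction is the following radical lemma: because ${\cal MK}_n={\cal M}(\zeta_{p^n},q^{1/p^n})$ already contains $\mu_{p^n}$, a $p^n$-th root of $q$, and all of $H\subseteq{\cal M}^\ast$, whenever $\bar p_j\in\langle\bar p_1\rangle$ in $H/\langle H^{p^n},q\rangle$ one has $t_j\in{\cal MK}_n(t_1)$. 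Indeed, writing $p_j=p_1^{a}h^{p^n}q^{b}$ with $h\in H$ and using $t_j^{p^n}\equiv p_j$, $t_1^{p^n}\equiv p_1\pmod{q^\Z}$, the element $t_j t_1^{-a}$ differs from $h\,(q^{1/p^n})^{b}$ by a $p^n$-th root of unity, and every factor other than $t_1^{a}$ lies in ${\cal MK}_n$.

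Next, for odd $p$ both computations give $H/\langle H^{p^n},q\rangle\cong\Z/p^n\Z$, a cyclic $p$-group, so its subgroups are totally ordered. Hence the subgroup generated by $\bar p_1,\ldots,\bar p_r$ is cyclic, and after renumbering $\bar p_1$ generates it; the radical lemma then yields $t_j\in{\cal MK}_n(t_1)$ for every $j$, whence ${\cal ML}_n={\cal MK}_n(t_1)$, the first alternative.

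For $p=2$ both computations give $H/\langle H^{2^n},q\rangle\cong\langle-1\rangle\times\langle c\rangle\cong\Z/2\Z\times\Z/2^n\Z$ with $c=5$ or $c=\varepsilon^2$. I would pass to the quotient by $\langle-1\rangle$, which is $\Z/2^n\Z$; renumbering so that $\bar p_1$ generates the image of $\langle\bar p_1,\ldots,\bar p_r\rangle$ there, I get $\langle\bar p_j,-1\rangle\subseteq\langle\bar p_1,-1\rangle$ for all $j$. If this subgroup is cyclic the argument above again gives ${\cal ML}_n={\cal MK}_n(t_1)$. Otherwise it has rank two, so some $\bar p_j\notin\langle\bar p_1\rangle$; as $\bar p_j\equiv\bar p_1^{k}$ modulo $\langle-1\rangle$ one gets $\bar p_j=-\bar p_1^{k}$, and replacing $P_j$ by $P_j-kP_1$ (a transformation in ${\rm GL}_r(\Z)$) and renumbering makes $\varphi(P_2)=-1$. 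Then $t_2^{2^n}\equiv-1$ forces $t_2=\varphi(T_2)=\zeta_{2^{n+1}}$, and since $\langle\bar p_j,-1\rangle\subseteq\langle\bar p_1,\bar p_2\rangle$ for the remaining indices the radical lemma gives ${\cal ML}_n={\cal MK}_n(t_1,\zeta_{2^{n+1}})$, the second alternative; this can occur only for $p=2$ because $H/\langle H^{p^n},q\rangle$ is cyclic when $p$ is odd.

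The step I expect to be the main obstacle is verifying that the purely group-theoretic normalizations inside the finite group $H/\langle H^{p^n},q\rangle$ can always be realized by an honest change of basis of the free abelian group $A$, i.e. by a matrix in ${\rm GL}_r(\Z)$ rather than merely by an automorphism of the quotient, and simultaneously that these normalizations translate into the claimed field inclusions via the radical lemma. For odd $p$ this is only a renumbering, but the $p=2$ bookkeeping — separating the $\langle-1\rangle$ factor, producing $\varphi(P_2)=-1$ by the single operation $P_j\mapsto P_j-kP_1$, and tracking how $t_2^{2^n}\equiv-1$ upgrades the radical to the root of unity $\zeta_{2^{n+1}}$ — is where the care is needed; everything else follows from the structure computations already carried out.
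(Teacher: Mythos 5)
Your proposal is correct and follows essentially the same route as the paper: it relies on the same structure computations of $H/\langle H^{p^n},q\rangle$ (cyclic of order $p^n$ for odd $p$, and $\langle -1\rangle\times\Z/2^n\Z$ for $p=2$, in both cases ${\cal M}=\Q_p$ and $[{\cal M}:\Q_p]=2$), the same renumbering via the local-ring/total-ordering observation, and the same normalization $p_2\mapsto p_2p_1^{-k}=-1$ (i.e.\ $P_2\mapsto P_2-kP_1$) forcing $\varphi(T_2)=\zeta_{2^{n+1}}$. The only difference is presentational: your explicit ``radical lemma'' spells out the Kummer-theoretic descent ($\bar p_j\in\langle\bar p_1\rangle$ implies $t_j\in{\cal MK}_n(t_1)$, using $\zeta_{p^n},q^{1/p^n}\in{\cal MK}_n$ and $H\subset{\cal M}^\ast$) that the paper asserts implicitly with ``In this case ${\cal L}_n={\cal K}_n(t_1)$ holds.''
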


\subsection{}

Assume $p>2$. We investigate the ramified index 
${\cal ML}_n/{\cal MK}_n$. 
We need the following Lemma. 

\begin{Lem}[\cite{L}, p.118, Theorem 5.1]
Let $G$ be a group and let $M$ be a $G$-module. 
Let $\alpha$ be in the center of $G$. 
Then $\mbox{H}^1(G,M)$ is annihilated by the map 
$x\mapsto \alpha x-x$ on $M$. 
In particular, if this map is an automorphism of $M$, 
then $\mbox{H}^1(G,M)=0$. 
\label{0808a}
\end{Lem}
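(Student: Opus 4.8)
The plan is to argue directly at the level of $1$-cocycles, exploiting the single hypothesis that $\alpha$ is central. Let $c\colon G\to M$ be a crossed homomorphism representing an arbitrary class in $\mbox{H}^1(G,M)$, so that $c(gh)=c(g)+g\cdot c(h)$ for all $g,h\in G$ (writing $M$ additively and $g\cdot m$ for the action). The entire argument rests on evaluating this identity at the two products $g\alpha$ and $\alpha g$, which coincide because $\alpha$ is central.

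First I would expand the cocycle relation in the two orders: $c(g\alpha)=c(g)+g\cdot c(\alpha)$ and $c(\alpha g)=c(\alpha)+\alpha\cdot c(g)$. Since $g\alpha=\alpha g$, the left-hand sides agree, and subtracting gives $\alpha\cdot c(g)-c(g)=g\cdot c(\alpha)-c(\alpha)$ for every $g\in G$. Setting $m:=c(\alpha)\in M$, this says precisely that the twisted cochain $g\mapsto \alpha\cdot c(g)-c(g)$ equals the coboundary $g\mapsto g\cdot m-m$. Hence the class of $x\mapsto\alpha x-x$ applied to $[c]$ vanishes in $\mbox{H}^1(G,M)$, which is exactly the assertion that the map $x\mapsto\alpha x-x$ annihilates $\mbox{H}^1(G,M)$.

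For the final assertion, I would note that $\phi\colon M\to M$, $\phi(x)=\alpha x-x$, is itself a homomorphism of $G$-modules: centrality of $\alpha$ yields $\alpha\cdot(g\cdot x)=g\cdot(\alpha\cdot x)$, whence $\phi(g\cdot x)=g\cdot\phi(x)$. By functoriality $\phi$ induces an endomorphism $\phi_\ast$ of $\mbox{H}^1(G,M)$, and the computation above shows $\phi_\ast=0$. If $\phi$ is an automorphism of $M$, then $\phi_\ast$ is an automorphism of $\mbox{H}^1(G,M)$; being simultaneously the zero map and invertible forces $\mbox{H}^1(G,M)=0$.

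There is essentially no serious obstacle here, as the proof is a short cocycle manipulation. The only points requiring care are the bookkeeping of the cocycle convention (on which side the $G$-action sits) and the verification that $\phi$ is $G$-equivariant, and both reduce to $\alpha$ lying in the center. No arithmetic input is needed, which is why the lemma can be cited verbatim from Lang and invoked in the ramification analysis of $\mbox{Gal}(L_n/K_n)$.
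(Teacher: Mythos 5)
Your proof is correct, and it is exactly the standard cocycle manipulation that the paper implicitly relies on: the paper gives no proof of this lemma, citing Lang (Theorem 5.1, p.\ 118), whose argument is precisely your computation of $c(g\alpha)=c(\alpha g)$ together with the observation that $x\mapsto \alpha x-x$ is $G$-equivariant and hence induces the zero (yet invertible, in the automorphism case) endomorphism of $\mathrm{H}^1(G,M)$. Nothing is missing.
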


By the inf rest-rest exact sequence, we have
$$
0\rightarrow H^{1}
(\mbox{Gal}({\cal M}(\zeta_{p^n})/{\cal M}),\mu_{p^n})
\rightarrow H^{1}(G_{{\cal M}},\mu_{p^n})
\rightarrow H^{1}(G_{{\cal M}(\zeta_{p^n})},\mu_{p^n})
^{ {\rm Gal}({\cal M}(\zeta_{p^n})/{\cal M})}.
$$
When $p>2$, $a-1$ is an unit of 
$(\Z/p^n\Z)^\ast\simeq \mbox{Gal}({\cal M}(\zeta_{p^n})/{\cal M})$ 
for a primitive root $a$ of $(\Z/p^n\Z)^\ast$. 
By Lemma \ref{0808a}, we have 
$$
H^{1}(\mbox{Gal}({\cal M}(\zeta_{p^n})/{\cal M}),\mu_{p^n})=0.
$$
Thus we have 
$$
0\rightarrow H^{1}(G_{{\cal M}},\mu_{p^n})
\rightarrow H^{1}(G_{{\cal M}(\zeta_{p^n})},\mu_{p^n})
^{ {\rm Gal}({\cal M}(\zeta_{p^n})/{\cal M})}.
$$
By the Kummer theory we have
$$
{\cal M}^\ast/{\cal M}^{\ast p^n}\hookrightarrow 
({\cal M}(\zeta_{p^n})^\ast/{\cal M}(\zeta_{p^n})^{\ast p^n})
^{{\rm Gal}({\cal M}(\zeta_{p^n})/{\cal M})}
\hookrightarrow {\cal M}(\zeta_{p^n})^\ast/{\cal M}(\zeta_{p^n})^{\ast p^n}. 
$$
Thus we see that the Galois group of 
${\cal MK}_n(u^{\frac{1}{p^n}})/{\cal M}(\zeta_{p^n})$ is 
of type $(p^n,p^n)$, 
where $u=1+p, ~\exp(p\sqrt{D})$. 
We have $[{\cal MK}_n(u^{\frac{1}{p^n}}):{\cal MK}_n]=p^n$. 

We will see that 
${\cal MK}_n(u^{\frac{1}{p^n}})/{\cal M}(\zeta_p)$ is 
a totally ramified extension. 

Suppose that ${\cal MK}_n(u^{\frac{1}{p^n}})/{\cal M}(\zeta_p)$ 
is not a totally ramified extension. 
Since ${\cal MK}_n(u^{\frac{1}{p^n}})/{\cal M}(\zeta_p)$ 
is a Galois extension, 
there exists an intermidiate field ${\cal N}$ 
such that ${\cal N}/{\cal M}(\zeta_p)$ 
is an unramified extension of degree $p$. 

Since ${\cal N}$ is the composite of ${\cal M}(\zeta_p)$ and 
the unramified extension of degree $p$ over ${\cal M}$, 
${\cal N}/{\cal M}$ is an abelian extension of degree $p(p-1)$. 
Since ${\cal M}(\zeta_{p^2})$ is the unique intermidiate field between 
${\cal MK}_n(u^{\frac{1}{p^n}})$ and ${\cal M}(\zeta_p)$ which is 
an abelian extension over ${\cal M}$ of degree $p(p-1)$, there does not exist ${\cal N}$. 
This contradicts the assumption.

Hence ${\cal MK}_n(u^{\frac{1}{p^n}})/{\cal M}(\zeta_p)$ 
is a totally ramified extension. 

When we put 
\begin{equation}
p_1=q^a u^{mp^\nu}w^{p^n}\quad (a\in \Z,~p\nmid m,~\nu \ge 0,~w\in {\cal M}^\ast), 
\label{1101a}
\end{equation}
we have 
$$
t_1=p_1^{\frac{1}{p^n}}=
\zeta_{p^n}^j \times q^{\frac{a}{p^n}}
 u^{\frac{m}{p^{n-\nu}}}w
 \quad 
 (j \in (\Z/p^n\Z)^\ast). 
$$
Since $\zeta_{p^n},~q^{\frac{1}{p^n}}$ are in ${\cal MK}_{n}$, 
we have 
$$
{\cal ML}_n={\cal MK}_{n}(t_1)={\cal MK}_{n}(u^{\frac{1}{p^{n-\nu}}})
$$
and
$$
[{\cal ML}_{n}:{\cal MK}_{n}]=
\left\{
\begin{array}{ll}
p^{n-\nu}&\mbox{if}~~n>\nu\\
1&\mbox{if}~~n\le \nu.
\end{array}
\right.
$$

${\cal MK}_n/{\cal K}_n$ is unramified and 
${\cal ML}_{n}/{\cal MK}_{n}$ is totally ramified. 
It follows from $p>2$ that $[{\cal MK}_n:{\cal K}_n]$ is coprime to 
$[{\cal L}_n:{\cal K}_n]$. 
Thus 
${\cal ML}_n/{\cal L}_n$ is unramified of degree $[{\cal MK}_n:{\cal K}_n]$ 
and  ${\cal L}_n/{\cal K}_{n}$ is totally ramified of degree $[{\cal ML}_{n}:{\cal MK}_{n}]$ holds. 

Let ${\cal I}_n$ be 
the inertia subgroup of ${\rm Gal}({\cal L}_n/{\cal K}_n)$. 
Then we have 
$$
|~{\cal I}_n~|=
\left\{
\begin{array}{ll}
p^{n-\nu}&\mbox{if}~~n>\nu\\
1&\mbox{if}~~n\le \nu .
\end{array}
\right.
$$
\subsection{}
We consider the case of $p=2$. 

We first discuss the case of ${\cal M}=\Q_2$. 
Then ${\cal L}_n$ is contained in ${\cal K}_n(5^{\frac{1}{2^n}},\zeta_{2^{n+1}})$. 
Further ${\cal K}_n(\sqrt{5})/{\cal K}_n$ is unramified. 
$\zeta_{2^{n}}$ is in ${\cal K}_n$ 
and $[{\cal K}_n(\zeta_{2^{n+1}}):{\cal K}_n]=2$. 
Therefore the inertia group of 
${\cal K}_n(5^{\frac{1}{2^n}},\zeta_{2^{n+1}})/{\cal K}_n$ 
is of type $(2^{m},2)$ for some $m \le n-1$. 
Thus 
$$
|~{\cal I}_n~|\le 2^{n-1}\times 2=2^n. 
$$

When $n\ge 2$ and ${\cal L}_n={\cal K}_n(t_1)$, we can improve the estimate. 
Since ${\cal L}_n/{\cal K}_n$ is cyclic, ${\cal I}_n$ is also cyclic. 
Thus we have
$$
|~{\cal I}_n~|\leq 2^{n-1}. 
$$

When $n=1$ we can decide $|~{\cal I}_1~|$ directly 
because ${\cal L}_1$ is contained in ${\cal K}_1(\sqrt{5},\zeta_{4})$. 
If ${\cal L}_1$ is contained in ${\cal K}_1(\sqrt{5})$, 
then $|~{\cal I}_1~|=1$. 
Otherwise, $|~{\cal I}_1~|=2$.

Secondly we discuss the case of ${\cal M}=\Q_2(\sqrt{5})$. 
Then ${\cal ML}_n$ is contained in ${\cal MK}_n(\varepsilon^{\frac{1}{2^{n-1}}},\zeta_{2^{n+1}})$. 
Further $\zeta_{2^{n}}$ is in ${\cal K}_n$ 
and $[{\cal K}_n(\zeta_{2^{n+1}}):{\cal K}_n]=2$. 
Therefore the inertia group of 
${\cal K}_n(\varepsilon^{\frac{1}{2^{n-1}}},\zeta_{2^{n+1}})/{\cal K}_n$ 
is of type $(2^{m},2)$ for some $m \le n-1$. 
Thus 
$$
|~{\cal I}_n~|\le 2^{n-1}\times 2=2^n. 
$$

When $n\ge 2$ and ${\cal ML}_n={\cal MK}_n(t_1)$, we can improve the estimate. 
Since ${\cal L}_n/{\cal K}_n$ is cyclic, ${\cal I}_n$ is also cyclic. 
Thus we have
$$
|~{\cal I}_n~|\leq 2^{n-1}. 
$$

When $n=1$ we can decide $|~{\cal I}_1~|$ directly because ${\cal ML}_1$ 
is equal to ${\cal MK}_1(\zeta_4)=\Q_2(\sqrt{5},\sqrt{2},\zeta_4)$. 
We note ${\cal MK}_1={\cal M}(\sqrt{q})={\cal M}(\sqrt{\pm 2})$. 
If ${\cal ML}_1$ contains $\zeta_4$ then $|~{\cal I}_1~|=2$. 
Otherwise, $|~{\cal I}_1~|=1$. 
Specially if $E(\Q)/E(\Q)\cap [2]_E(E(\Q_2))$ is not cyclic, 
then $t_2=-1$ and thus $|~{\cal I}_1~|=2$. 

\subsection{The global case}
Let $v$ be any prime above $p$ in $K_n$ and $I_v$ the inertia subgroup of ${\rm Gal}(L_n/K_n)$ at $v$. 
Put $I_p:=\langle I_v~|~v|p\rangle$. 
In this subsection we take a basis of the maximal free subgroup $A$ 
of $E(\Q)$ satisfying the assertion of Proposition \ref{cyclic-imp}. 

We first consider the case of $p>2$. 

If $|~I_v~|=1$, $L_n/K_n$ is unramified at $v$. 
Since both $L_n/\Q$ and $K_n/\Q$ are Galois extensions, 
$L_n/K_n$ is unramified at any prime above $p$ in $K_n$. 
Therefore $I_p=1$. 

We assume that $|~I_v~|=p^{n-\nu}$. 

By Proposition \ref{cyclic-imp}, we have 
${\cal ML}_n={\cal MK}_n(\varphi(T_1))$. 
Thus $L_n/K_n(T_1)$ is unramified at $v$. 
Since both $L_n/\Q$ and $K_n(T_n)/\Q$ are Galois extensions, 
$L_n/K_n(T_1)$ is unramified at any prime above $p$ in $K_n(T_1)$. 
Thus there exists the injective homomorphism 
from $I_p$ to $\mbox{Gal}(K_n(T_1)/K_n)$. 
Since $I_p$ is generated by elements in $I_v$ and their conjugate, 
the exponent of $I_p$ is equal to that of $I_v$. 
$\mbox{Gal}(K_n(T_1)/K_n)$ is $G_n$-isomorphic to $E[p^n]$ 
and $E[p^{n-\nu}]$ is unique $G_n$-invariant subgroup of $E[p^n]$ 
of exponent $p^{n-\nu}$. 
Since $E[p^{n-\nu}]$ is irreducible with respect to the action of $G_n$, 
we have 
$$
|~I_p~|=p^{2(n-\nu)}. 
$$

Secondly we consider the case of $p=2$. 

Suppose that ${\cal ML}_n={\cal MK}_n(t_1)$ and $n\ge 2$. 
Then, $|~I_v~|$ is at most $2^{n-1}$. 
Similarly as above, 
$L_n/K_n(T_1)$ is unramified at any prime above $2$ in $K_n(T_1)$. 
Since $\mbox{Gal}(K_n(T_1)/K_n)\simeq E[2^n]$, 
the inequality
$$
|~I_2~|\le 2^{2(n-1)}
$$
holds. 

If $n=1$, $|~I_v~|$ is at most $2$. 
Since $\mbox{Gal}(K_1(T_1)/K_1)\simeq E[2]$, 
the inequality
$$
|~I_2~|\le 2^{2}
$$
holds. 

Suppose that ${\cal ML}_n={\cal MK}_n(t_1,t_2)$. 
Then $L_n/K_n(T_1,T_2)$ is 
unramified at any prime above $2$ in $K_n(T_1,T_2)$. 
Since $I_v$ is of type $(2^m,2)$ for some $m \le n-1$ 
and $\varphi(T_2)=\zeta_{2^{n+1}}$, 
there exists $\alpha$ in $E[2]$ such that 
${}^\sigma T_2=T_2\oplus_E \alpha$ for $\sigma$ in $I_v$ 
and
$$
I_v \hookrightarrow E[2^{n-1}]\times E[2]
$$
via the isomorphism 
$$
\mbox{Gal}(K_n(T_1,T_2)/K_n)\simeq E[2^n]\times E[2^n].
$$
Thus 
the inequality
$$
|~I_2~|\le 2^{2(n-1)}\times 2^{2}=2^{2n}
$$
holds. 

If $n=1$, $|~I_v~|$ equals 2. 
Indeed ${\cal ML}_1=\Q_2(\sqrt{5},\sqrt{2},\zeta_4)$ 
and ${\cal MK}_1={\cal M}(\sqrt{\pm 2})$. 
It follows from $\varphi(T_2)=\zeta_{4}$ that $K_1(T_2)/K_1$ is ramified. 
Thus $K_1(T_1,T_2)/K_1(T_2)$ is unramified. 
Since $\mbox{Gal}(K_1(T_2)/K_1)\simeq E[2]$, 
the inequality
$$
|~I_2~|\le 2^{2}
$$
holds. 

Now we have the following theorem. 
%{\color{red} We note that $E$ has split multiplicative reduction at 2 if and only if ${\cal M}=\Q_2$ (cf. \cite{Sil2}, p.\ 442, Theorem 5.3).}
%
%
%
\begin{Thm}\label{est1}
Assume $p>2$ and $p\nmid\mbox{ord}_p(\Delta)$. 
Then 
the equation $|~I_p~|=p^{2(n-\nu)}$ holds for $n>\nu$ 
and $|~I_p~|=1$ holds for $n\le \nu$. 

Assume $p=2$. 
Then the inequality $|~I_p~|\le p^{2(n+r_{2,n}-2)+\delta_2}$ holds for all $n\ge 1$, 
where $r_{2,n}=1,2$ according as $E(\Q)/E(\Q)\cap [2^n]_E(E(\Q_2))$ 
is cyclic or not, 
and
$$
\delta_2=\left\{\begin{array}{ll}
2& \mbox{if $n=1$ and $r_{2,1}=1$}\\
0 &\mbox{otherwise}.
\end{array}
\right.
$$
\end{Thm}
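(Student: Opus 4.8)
The plan is to deduce the theorem from the local computations of the preceding subsections, the passage from local to global being controlled by the fact that every field in the tower $\Q \subset K_n \subset L_n$ is Galois over $\Q$. First I would note that, as $G_n \simeq {\rm GL}_2(\Z/p^n\Z)$, every prime $v \mid p$ of $K_n$ has completion ${\cal K}_n = \Q_p(E[p^n])$, so the local inertia $I_v$ is canonically the inertia subgroup ${\cal I}_n$ of ${\rm Gal}({\cal L}_n/{\cal K}_n)$ already studied. Since $L_n/\Q$ is Galois, the subgroups $I_v$ are conjugate, and conjugation realizes the $G_n$-action on the abelian group ${\rm Gal}(L_n/K_n) \simeq E[p^n]^r$; hence $I_p = \langle I_v \mid v\mid p\rangle$ is precisely the $G_n$-submodule generated by a single $I_v$, and its exponent coincides with that of ${\cal I}_n$.

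For $p>2$ the crucial structural input is Proposition \ref{cyclic-imp}: since ${\cal ML}_n = {\cal MK}_n(\varphi(T_1))$, locally $T_2,\dots,T_r$ lie in an unramified extension of $K_n(T_1)$, so $\Phi_n(I_v)$ is contained in a single ``diagonal'' copy of $E[p^n]$ inside $E[p^n]^r$, namely the $G_n$-stable line spanned by a vector $(1,c_2,\dots,c_r)$. Consequently $I_p$ lies in this copy of $E[p^n]$ and injects into ${\rm Gal}(K_n(T_1)/K_n)$. When $n\le\nu$ Subsection 3.2 gives ${\cal I}_n=1$, whence $I_p=1$; when $n>\nu$ it gives ${\cal I}_n$ cyclic of order $p^{n-\nu}$, so $I_p$ is the $G_n$-submodule of $E[p^n]$ generated by an element of order $p^{n-\nu}$. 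The only $G_n$-stable subgroups of $E[p^n]$ are the $E[p^i]$ (a consequence of the ${\rm GL}_2(\F_p)$-irreducibility of $E[p]$), so the one of exponent $p^{n-\nu}$ is $E[p^{n-\nu}]$, giving the exact value $|I_p| = p^{2(n-\nu)}$.

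For $p=2$ I would first match the invariant $r_{2,n}$ to the local dichotomy: the image of $\langle p_1,\dots,p_r\rangle$ in $H/\langle H^{2^n},q\rangle \simeq E(\Q_2)/[2^n]_E(E(\Q_2))$ is exactly $E(\Q)/\bigl(E(\Q)\cap[2^n]_E(E(\Q_2))\bigr)$, so $r_{2,n}=1$ is the cyclic (rank one) case ${\cal ML}_n={\cal MK}_n(t_1)$ and $r_{2,n}=2$ is the rank two case ${\cal ML}_n={\cal MK}_n(t_1,t_2)$ with $\varphi(T_2)=\zeta_{2^{n+1}}$. In the first case I would feed $|{\cal I}_n|\le 2^{n-1}$ (for $n\ge2$) through the injection $I_2\hookrightarrow {\rm Gal}(K_n(T_1)/K_n)\simeq E[2^n]$ to get $|I_2|\le 2^{2(n-1)}$; in the second, the type-$(2^m,2)$ shape of $I_v$ together with the extra $E[2]$-factor produced by $\varphi(T_2)=\zeta_{2^{n+1}}$ yields $I_v\hookrightarrow E[2^{n-1}]\times E[2]$ and hence $|I_2|\le 2^{2(n-1)}\cdot 2^2 = 2^{2n}$. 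A short check then shows these bounds are exactly $2^{2(n+r_{2,n}-2)+\delta_2}$ once the $n=1$ degeneration is absorbed into $\delta_2$.

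The hard part is the $p=2$ bookkeeping rather than any single deep ingredient. There one obtains only inequalities, because the cyclotomic contribution $\zeta_{2^{n+1}}$ and the type-$(2^m,2)$ shape of $I_v$ destroy the clean single-line and irreducibility argument available for odd $p$; moreover the naive bound $2^{2(n-1)}$ collapses to $1$ at $n=1$, which is exactly why the correction term $\delta_2$ must be inserted to recover the correct bound $2^2$ in that boundary case.
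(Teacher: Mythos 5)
Your proposal follows essentially the same route as the paper's own proof: identify $I_p$ with the $G_n$-submodule of ${\rm Gal}(L_n/K_n)\simeq E[p^n]^r$ generated by one local inertia group (so that its exponent equals that of ${\cal I}_n$), use Proposition \ref{cyclic-imp} to inject it into ${\rm Gal}(K_n(T_1)/K_n)\simeq E[p^n]$ and invoke that the only $G_n$-stable subgroups of $E[p^n]$ are the $E[p^i]$ for $p>2$, and for $p=2$ run the exponent bounds together with the extra $E[2]$-factor coming from $\varphi(T_2)=\zeta_{2^{n+1}}$, absorbing the $n=1$ degeneration into $\delta_2$. The one loose point in your write-up — deducing the scalar ``diagonal line'' shape of $\Phi_n(I_v)$ purely from unramifiedness of $L_n/K_n(T_1)$ above $p$, which by itself only makes $I_v$ the graph of some homomorphisms, the scalar form really coming from the Tate-curve computation where every ${}^\sigma t_j/t_j$ is a power of the same root of unity — is precisely the step the paper asserts with the same brevity, so the two arguments coincide in both structure and level of detail.
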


\begin{Rem}
Note that the authors roughly estimated it as 
$|~\Phi_n(I_p)~|\le p^{4n}$ in \S $4$ of {\rm \cite{SY}}. 
\end{Rem}

When $p=2$ and ${\cal I}_n$ is not cyclic, we may assume $\varphi(T_2)=-1$. Thus $\zeta_4$ is in ${\cal L}_1$. 
We note that $\zeta_4\notin{\cal L}_1$ implies $r_{2,n}=1$. 
\section{The inertia subgroups of 
${\rm Gal}(L_n/K_n)$ on $\ell \ne p$}
In this section we estimate the order of the inertia subgroups of 
 ${\rm Gal}(L_n/K_n)$ on $\ell\ne p$. 
 
\subsection{The local case when $\ell$ is multiplicative}

Let $\l$ be a prime ideal in $L_n$ lying above $\ell$. 
Let ${\cal L}_n$ and ${\cal K}_n$ be the completion of $L_n$ and $K_n$ respectively. Since $E$ has multiplicative reduction at $\ell$, 
$E$ is isomorphic to the Tate curve $E_q$ for some $q$ in 
$\ell\Z_{\ell}$. 
We denote by $\varphi$ the isomorphism from $E$ to $E_q$. 
The isomorphism 
$\varphi$ is defined over an unramified extension $\M$ over $\Q_\ell$ 
of degree at most two. 
We have ${\cal MK}_n={\cal M}(\zeta_{p^n},q^{\frac{1}{p^n}})$. 

We define $p_j$ in $E_{q}(\overline{\Q}_\ell)$ by 
$\varphi(P_j)=p_j~(1\le j \le r)$. 
We put 
$$
H:=\left\{
\begin{array}{ll}
\Q_\ell^\ast & \mbox{if}~~{\cal M}=\Q_\ell\\
\{x\in {\cal M}^\ast ~|~N_{{\cal M}/\Q_\ell}(x)\in q^\Z\}&
  \mbox{if}~~[{\cal M}:\Q_\ell]=2. 
\end{array}
\right.
$$
\subsubsection{}
We consider the case where $\M=\Q_\ell$ and $\ell\ne 2$. 

Since 
$$
\Q_\ell^\ast=\langle l \rangle\times (\Z/\ell\Z)^\ast\times (1+\ell\Z_\ell), 
$$
and the $p^n$-th power mapping is invertible by $\ell \ne p$, 
we have 
$$
H/H^{p^n}=\Q_\ell^\ast/(\Q_\ell)^{\ast p^n}=\langle l \rangle \times \langle 
\zeta_{\ell-1}\rangle
\simeq (\Z/p^n\Z)\times (\Z/p^{m} \Z), 
$$
where we put $m:=\min\{\mbox{ord}_p(\ell-1),n\}$. 
We have
\begin{equation}
H/\langle H^{p^n},q\rangle=\langle l \rangle \times \langle 
\zeta_{\ell-1}\rangle\simeq (\Z/p^\nu\Z)\times (\Z/p^m \Z), 
\label{0811b}
\end{equation}
where $\nu:=\min\{\mbox{ord}_p(\mbox{ord}_\ell(q)), n\}$. 

It follows from (\ref{0811b}) that 
$$
{\cal L}_n\subset {\cal K}_n(\zeta_{p^n(\ell-1)},\ell^{\frac{1}{p^n}})
$$
and 
$$
[{\cal K}_n(\zeta_{p^n(\ell-1)},\ell^{\frac{1}{p^n}}):{\cal K}_n(\zeta_{p^n(\ell-1)})]=p^\nu. 
$$
We also have 
$$
\Q_\ell (\zeta_{p^n(\ell-1)},q^{\frac{1}{p^n}})=
\Q_\ell (\zeta_{p^n(\ell-1)},\ell^{\frac{1}{p^{n-\nu}}}).
$$ 

Since ${\cal L}_n(\zeta_{p^n(\ell-1)})/
{\cal K}_n(\zeta_{p^n(\ell-1)})$ is cyclic, 
there exists $t_j$ (say $t_1$) such that 
${\cal L}_n(\zeta_{p^n(\ell-1)})={\cal K}_n(\zeta_{p^n(\ell-1)}, t_1)$. 

Since ${\cal K}_n(\zeta_{p^n(\ell-1)})/{\cal K}_n$ is unramified, 
the ramification index ${\cal L}_n/{\cal K}_n$ is equal to that of 
${\cal L}_n(\zeta_{p^n(\ell-1)})/{\cal K}_n(\zeta_{p^n(\ell-1)})$. 

On the one hand, 
$\Q_\ell(\ell^{\frac{1}{p^n}})/\Q_\ell$ is a 
totally ramified extension of degree $p^n$. 
On the other hand, 
$\Q_\ell(\zeta_{p^n(\ell-1)})/\Q_\ell$ is an unramified extension 
by $\ell \nmid p^n(\ell-1)$. 
Thus the ramified index of the extension 
$\Q_\ell(\zeta_{p^n(\ell-1)},\ell^{\frac{1}{p^n}})/\Q_\ell$ is $p^n$. 

We put $\mu:=\min\{n,~\mbox{ord}_p(\mbox{ord}_\ell(p_1))\}$. 
Then we have 
$$
{\cal L}_n(\zeta_{p^n(\ell-1)})=
{\cal K}_n(\zeta_{p^n(\ell-1)},t_1)=
\Q_\ell(\zeta_{p^n(\ell-1)},
\ell^{\frac{1}{p^{n-\nu}}},
\ell^{\frac{1}{p^{n-\mu}}})
.$$ 
Hence we have
$$
|~{\cal I}_n~|=\left\{
\begin{array}{ll}
p^{\nu-\mu}&\mbox{if}~~\mu<\nu\\
1 &\mbox{if}~~\mu\ge \nu. 
\end{array}
\right.
$$
If $\mbox{ord}_p(\mbox{ord}_\ell(q)) \le \mu$, 
we see that $|~{\cal I}_n~|=1$ for all $n\ge 1$. 
If $\mbox{ord}_p(\mbox{ord}_\ell(q))>\mu$, 
we see that $|~{\cal I}_n~|$ does not depend on $n$ 
for all $n\ge \mbox{ord}_p(\mbox{ord}_\ell(q))$.

\subsubsection{}
\label{0627a}
We consider the case where $[{\cal M}:\Q_\ell]=2$ and $\ell\ne 2$. 

Since $N_{{\cal M}/\Q_\ell}(q)=q^2$, either 
$N_{{\cal M}/\Q_\ell}H=q^{\Z}$ or $N_{{\cal M}/\Q_\ell}H=q^{2\Z}$ holds. 
We have 
$$
H=\langle u \rangle \times U_{{\cal M},1}
$$
for some $u$ in $\cal M$. 
We may take $u$ satisfying $N_{{\cal M}/\Q_\ell}(u)=q^t$ for $t=1,2$. 
Since ${\cal M}$ is unramified over $\Q_\ell$, 
we have $N_{{\cal M}/\Q_\ell}{\cal O}^\ast=\Z_\ell^\ast$.
If $\mbox{ord}_\ell(q)$ is even, we have $N_{{\cal M}/\Q_\ell}(u)=q$. 
If $\mbox{ord}_\ell(q)$ is odd, we have $N_{{\cal M}/\Q_\ell}(u)=q^2$. 

In the case of $t=2$, 
$u$ is in $\langle q\rangle \times U_{{\cal M},1}$. 
If either $p>2$ or $t=2$ holds, we have 
$$
H/\langle H^{p^n},q \rangle =U_{\M,1}/U_{\M,1}^{p^n}. 
$$
If $p=2$ and $t=1$, we have 
$$
H/\langle H^{p^n},q\rangle=\langle u \rangle \times U_{\M,1}/U_{\M,1}^{p^n}. 
$$

Since 
$$
{\cal O}^\ast=({\cal O}/\ell{\cal O})^\ast \times (1+\ell{\cal O}), 
$$
we have 
$$
U_{\M,1}=\mu_{\ell+1}\times \langle \exp(\ell \sqrt{D})\rangle.
$$

If either $p>2$ or $t=2$ holds, we have 
$$
H/\langle H^{p^n},q\rangle
=\langle \zeta_{\ell+1} \rangle \simeq \Z/p^\mu \Z, 
$$
where we put $\mu:=\min\{\mbox{ord}_p(\ell+1),n\}$.
Then we have
$$
\M {\cal L}_n\subset \M {\cal K}_n(\zeta_{p^n(\ell+1)}). 
$$
It follows from $\ell \nmid p^n(\ell+1)$ that 
$\M {\cal K}_n(\zeta_{p^n(\ell+1)})/\M {\cal K}_n$ is unramified. 
Thus ${\cal L}_n/ {\cal K}_n$ is unramified. 

Hence we have $|~{\cal I}_n~|=1$. 

If both $p=2$ and $t=1$ holds, we have 
$$
H/\langle H^{p^n},q\rangle=
\langle u \rangle \times 
\langle \zeta_{\ell+1} \rangle \simeq 
\Z/2 \Z \times \Z/2^\mu \Z.
$$
We have
$$
\M {\cal L}_n\subset \M {\cal K}_n(\zeta_{2^n(\ell+1)},u^{\frac{1}{2^n}}). 
$$
It follows from $\ell \nmid 2^n(\ell+1)$ that 
$\M {\cal K}_n(\zeta_{2^n(\ell+1)})/\M {\cal K}_n$ is unramified. 
Thus the ramified index of 
${\cal L}_n/ {\cal K}_n$ is less than or equal to two. 

Hence we have $|~{\cal I}_n~|\le 2$. 
\subsubsection{}
We consider the case of $\ell=2$ and $\M=\Q_2$.
 
On the subgroup $ \langle -1 \rangle\times (1+4\Z_2)$ of 
$$\Q_2^\ast=\langle 2 \rangle\times \langle -1 \rangle \times (1+4\Z_2)$$ 
the $p^n$-the power homomorphism is invertible by $2 \ne p$. 
Thus we have 
$$
H/\langle H^{p^n},q\rangle=\langle 2 \rangle \simeq \Z/p^\nu\Z,  
$$
where we put $\nu:=\min\{n,~\mbox{ord}_p(\mbox{ord}_2(q))\}$. 
We have 
$$
{\cal K}_n=\Q_2(\zeta_{p^n},2^{\frac{1}{p^{n-\nu}}}). 
$$
On the one hand, $\Q_2(2^{\frac{1}{p^n}})/\Q_2$ is a totally ramified extension of degree $p^n$. 
On the other hand, $\Q_2(\zeta_{p^n})/\Q_2$ is unramified by $2\nmid p^n$. 
Thus the ramification index of 
$\Q_2(\zeta_{p^n},2^{\frac{1}{p^n}})/\Q_2$ is $p^n$. 

We put $\mu:=\mbox{ord}_p(\mbox{ord}_2(p_1))$. 
Then we have 
$$
{\cal L}_n={\cal K}_n(p_1^{\frac{1}{p^n}})=
\Q_2(\zeta_{p^n},2^{\frac{1}{p^{n-\nu}}},2^{\frac{1}{p^{n-\mu}}}). 
$$
Hence we have
$$
|~{\cal I}_n~|=\left\{
\begin{array}{ll}
p^{\nu-\mu}&\mbox{if}~~\mu<\nu\\
1 &\mbox{if}~~\mu\ge \nu. 
\end{array}
\right.
$$
\subsubsection{}
We consider the case of $\ell=2$ and $[\M:\Q_2]=2$.
Then $q^{\Z}\times U_{{\cal M},1}$ has index at most two in $H$. 

Since $p\ne 2$ and 
$U_{\M,1}=\mu_6 \times \langle \varepsilon^2 \rangle$ 
by (\ref{1103b}), 
we have 
$$
H/\langle H^{p^n},q\rangle 
=\langle \varepsilon^2  \rangle\simeq \Z/p^n\Z 
$$
for $p\ne 3$ and 
$$
H/
\langle H^{p^n},q\rangle 
=\mu_3 \times \langle
\varepsilon^2 \rangle 
\simeq \Z/3\Z\times \Z/3^n\Z 
$$
for $p=3$. 

When $p\ne 3$, we have 
$$
\M {\cal L}_n\subset \M {\cal K}_n(\varepsilon^{\frac{2}{p^n}}).  
$$
Since $p\ne 2$ and $\varepsilon$ is unit, 
$\M {\cal L}_n/\M {\cal K}_n$ is unramified 
and thus ${\cal L}_n/ {\cal K}_n$ is unramified. 

When $p=3$, we have 
$$
\M {\cal L}_{n}\subset 
\M{\cal K}_n(\varepsilon^{\frac{2}{3^n}},\zeta_{3^{n+1}}) 
$$
Since $\Q_2(\zeta_{3^{n+1}})/\Q_2$ is unramified 
and $\varepsilon$ is unit, 
we see that ${\cal ML}_{n}/ {\cal MK}_{n}$ is unramified. 
Hence ${\cal L}_{n}/ {\cal K}_{n}$ is unramified. 

In these cases we have $|~{\cal I}_n~|=1$. 
\subsubsection{}
For a prime $\ell$ at which $E$ has multiplicative reduction, 
we define 
$$
\nu_\ell:=
\left\{
\begin{array}{ll}
\min\{\mbox{ord}_p(\mbox{ord}_\ell(\Delta)),n\}&
\mbox{if the reduction is split.}\\
1&
\mbox{if $p=2$, the reduction is non-split, }\\
&
\mbox{and $\mbox{ord}_\ell(\Delta)$ is even.}\\
0&\mbox{otherwise}. 
\end{array}
\right.
$$
Then the ramification index of 
${\cal L}_{n}/ {\cal K}_{n}$ is less than or equal to $p^{\nu_\ell}$ 
if $E$ has multiplicative reduction at $\ell\ne p$. 

Put $I_\ell:=\langle~I_{\l} \ |\ \l|\ell~\rangle $ as before. 
Since ${\rm Gal}(L_n/K_n)$ is of $p$-th power order, 
each $I_{\l}$ factors through tame quotient, hence it is a cyclic group. 

If $I_{\l}=1$, then $I_\ell=1$. 
Suppose that $I_{\l}\ne 1$. 
The ramification index $K_n(T_j)/K_n$ at $\l$ takes the maximal value 
at some $j$ (say $j=1$). 
If it also takes maximal values at $k\ne 1$, 
then the ramification index of $K_n(T_1,T_k)/K_n(T_1)$ at $\l$ 
is equal to that of $K_n(T_1,T_k)/K_n(T_k)$. 
Since $I_\l$ is cyclic, both 
$K_n(T_1,T_k)/K_n(T_1)$ and 
$K_n(T_1,T_k)/K_n(T_k)$ are unramified at $\l$. 

If the ramification index of $K_n(T_1)/K_n$ at $\l$ is greater than 
that of $K_n(T_k)/K_n$, 
then $K_n(T_1,T_k)/K_n(T_k)$ is ramified at $\l$. 
Since $I_\l$ is cyclic, 
$K_n(T_1,T_k)/K_n(T_1)$ is unramified at $\l$. 

Thus $L_n/K_n(T_1)$ is unramified at $\l$. 
Since $K_n(T_1)/\Q$ is a Galois extension, 
$L_n/K_n(T_1)$ is unramified at $\l$ are unramified at any prime lying above 
$\ell$. 

Therefore we have an upper bound 
$|~I_\ell~|\le p^{2\nu_\ell }$. 
Now we have proved the following theorem. 
\begin{Thm}\label{semistable}
The inequality $|~I_\ell~|\le p^{2\nu_\ell}$ 
holds for a prime $\ell\ne p$ at which $E$ has multiplicative reduction.
\end{Thm}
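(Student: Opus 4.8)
The plan is to derive the global bound from the four local computations just carried out (the cases $\M=\Q_\ell$ and $[\M:\Q_\ell]=2$, for $\ell$ odd and for $\ell=2$) together with a cyclicity argument parallel to the one used on $p$ in the preceding section. First I would repackage those local cases into the single statement that the ramification index of ${\cal L}_n/{\cal K}_n$ over a prime of multiplicative reduction is at most $p^{\nu_\ell}$. For such a prime the Tate parameter satisfies $\mbox{ord}_\ell(q)=\mbox{ord}_\ell(\Delta)$, so the exponent $\nu=\min\{\mbox{ord}_p(\mbox{ord}_\ell(q)),n\}$ produced in the split cases is exactly $\nu_\ell$, while in the non-split cases one reads off $|{\cal I}_n|=1$ whenever $p>2$ (or $\mbox{ord}_\ell(\Delta)$ is odd) and $|{\cal I}_n|\le 2=p^{\nu_\ell}$ precisely when $p=2$ and $\mbox{ord}_\ell(\Delta)$ is even. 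Thus $|{\cal I}_n|\le p^{\nu_\ell}$ in every case, which is what the definition of $\nu_\ell$ encodes.

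Next I would pass from the local inertia to the global group $I_\ell=\langle I_\l\mid\l\mid\ell\rangle$. Since $\mbox{Gal}(L_n/K_n)$ has $p$-power order and $\ell\ne p$, each inertia group $I_\l$ factors through the tame quotient and is therefore cyclic, with $|I_\l|$ equal to the local ramification index, hence $|I_\l|\le p^{\nu_\ell}$. As $L_n/\Q$ is Galois the $I_\l$ for $\l\mid\ell$ are mutually conjugate, and as $\mbox{Gal}(L_n/K_n)\simeq E[p^n]^r$ is abelian by Theorem \ref{0813a}, the group $I_\ell$ is an abelian $p$-group whose exponent equals the common exponent $\le p^{\nu_\ell}$ of the $I_\l$.

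It then remains to embed $I_\ell$ into $E[p^n]$, and this embedding is what turns the exponent $p^{\nu_\ell}$ into the order bound $p^{2\nu_\ell}$. Fixing one $\l\mid\ell$, I would reorder the generators so that $K_n(T_1)/K_n$ has maximal ramification at $\l$ among the $K_n(T_j)/K_n$. Because $\mbox{Gal}(L_n/K_n)$ injects into $\prod_j\mbox{Gal}(K_n(T_j)/K_n)$ and $I_\l$ is cyclic, the image of $I_\l$ in $\mbox{Gal}(K_n(T_1)/K_n)$ already has order $|I_\l|$; equivalently $I_\l\cap\mbox{Gal}(L_n/K_n(T_1))=1$, so $L_n/K_n(T_1)$ is unramified at $\l$. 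Since $K_n(T_1)/\Q$ is Galois, this unramifiedness spreads to every prime above $\ell$, so the restriction $\mbox{Gal}(L_n/K_n)\to\mbox{Gal}(K_n(T_1)/K_n)\simeq E[p^n]$ is injective on $I_\ell$. An abelian subgroup of $E[p^n]\simeq(\Z/p^n\Z)^2$ of exponent at most $p^{\nu_\ell}$ lies in $E[p^{\nu_\ell}]$, a group of order $p^{2\nu_\ell}$; hence $|I_\ell|\le p^{2\nu_\ell}$.

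The main obstacle is the cyclicity step in the third paragraph: a priori all $r$ points $T_j$ may ramify at $\l$, and only the fact that tame inertia is cyclic lets one collapse that ramification into the single subextension $K_n(T_1)/K_n$. The delicate accounting is to keep exactly the factor of two coming from the rank of $E[p^n]$ as a $\Z/p^n\Z$-module---rather than overcounting, as in the cruder estimate $p^{4n}$ recorded in the Remark after Theorem \ref{est1}---and to ensure that the generator $T_1$ chosen locally at $\l$ serves for all primes above $\ell$, which is secured by the Galois property of $K_n(T_1)/\Q$.
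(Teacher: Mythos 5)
Your proposal is correct and follows essentially the same route as the paper: the four local Tate-curve computations packaged into the bound $p^{\nu_\ell}$ on the ramification index, cyclicity of tame inertia to concentrate the ramification in $K_n(T_1)/K_n$, the Galois property of $K_n(T_1)/\Q$ to spread unramifiedness of $L_n/K_n(T_1)$ to all primes above $\ell$, and finally the exponent bound inside ${\rm Gal}(K_n(T_1)/K_n)\simeq E[p^n]$ forcing the image into $E[p^{\nu_\ell}]$. Even the step you state as ``the restriction is injective on $I_\ell$'' (rather than merely on each $I_{\frak l}$) is handled with exactly the same brevity in the paper, so your reconstruction matches the published argument in both substance and level of detail.
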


\subsection{The local case when $\ell$ is potentially good}
Next we consider the case where 
$E$ has potentially good reduction at $\ell$. 
For such a prime $\ell$ we have the following lemma which is a part of Proposition 4.7 of \cite{Gro} due to Raynaud. 
\begin{Lem}\label{pot} Let $E$ be an elliptic curve over $\Q$ which has potentially good reduction at $\ell$. 
Put $m_0=1$ if $p>2$, $m_0=2$ otherwise. 
Then 
the base change $E/K_{m_0}$ has good reduction at any prime in $K_{m_0}$ above $\ell$. 
\label{120317c}
\end{Lem}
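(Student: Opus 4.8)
The statement is about potentially good reduction, and the conclusion — good reduction after base change to $K_{m_0}$ — is fundamentally a statement about the inertia action on the Tate module becoming trivial (for $p > 2$) or sufficiently controlled (for $p = 2$) once we adjoin enough torsion points. The plan is to invoke the criterion of N\'eron–Ogg–Shafarevich together with the structure of the local Galois representation on a curve with potentially good reduction. Since $E$ has potentially good reduction at $\ell$, there is a finite extension $F/\Q_\ell$ over which $E$ acquires good reduction, and the obstruction to good reduction over $\Q_\ell$ itself is measured by the image of inertia $I_\ell$ in ${\rm Aut}(T_p(E))$, which is a finite cyclic group whose order divides the number of geometric connected components or, more precisely, is controlled by the semistability defect.

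**Key steps.** First I would recall the classical fact (Serre–Tate) that for an elliptic curve with potentially good reduction at $\ell \ne p$, the inertia group $I_\ell$ acts on $T_p(E)$ through a finite cyclic quotient of order $e \in \{1,2,3,4,6\}$, and that $E$ acquires good reduction precisely over the fixed field of the kernel of this action. Second, the key input is that adjoining the $p^{m_0}$-torsion already trivializes this action: the point is that the image of inertia in ${\rm GL}_2(\Z/p^{m_0}\Z)$ must be trivial for $E/K_{m_0}$ to have good reduction, and one shows that an element of order $e \in \{2,3,4,6\}$ acting on $E[p^{m_0}]$ acts nontrivially, so that the field $K_{m_0}$ is large enough to kill the inertia. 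Concretely, for $p > 2$ one checks that a nontrivial automorphism of order dividing $6$ acts nontrivially already on $E[p]$, while for $p = 2$ one needs $E[4]$ because an automorphism of order $3$ or $6$ can act on $E[2] \cong (\Z/2\Z)^2$ with a fixed nonzero vector, whereas on $E[4]$ it cannot act trivially. Third, I would then conclude by N\'eron–Ogg–Shafarevich: since $I_\ell$ acts trivially on $E[p^{m_0}]$, and the action on $T_p(E)$ factors through its action on $E[p^{m_0}]$ when the inertia image has order dividing $m_0$-determined bound, the base-changed curve $E/K_{m_0}$ is unramified at primes above $\ell$ and hence has good reduction there.

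**Main obstacle.** The delicate point, and the reason the statement is attributed to Raynaud via Gross, is the exact matching of the bound $m_0$ to the possible orders of the inertia image in the two cases $p > 2$ and $p = 2$. The subtlety is that for $p = 2$, the residual representation on $E[2]$ is insufficient precisely because automorphisms of order $3$ (coming from potentially good reduction of additive-type $II, II^*$, etc.) can fix a line modulo $2$; one genuinely needs $E[4]$ to separate these, and verifying that no nontrivial element of the tame inertia quotient can act trivially on $E[4]$ is where the real content lies. Rather than reprove Raynaud's proposition from scratch, I expect the cleanest route is simply to cite Proposition 4.7 of \cite{Gro} directly, since the paper explicitly states the lemma as a known result; the only thing to verify is that our normalization of $K_{m_0} = \Q(E[p^{m_0}])$ matches the field appearing in Gross's statement, which is immediate from the definition of $m_0$.
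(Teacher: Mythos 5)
Your skeleton is in fact the same as the paper's own argument, so let me first make that explicit: the paper observes that $\rho_{E,p}(G_{K_{m_0}})$ lies in $1+p^{m_0}M_2(\Z_p)$ (every element of $G_{K_{m_0}}$ fixes $E[p^{m_0}]$), that this principal congruence subgroup is a \emph{torsion-free} pro-$p$ group, and that potentially good reduction forces the image of the inertia group at a prime of $K_{m_0}$ above $\ell$ to be finite (bounded by $[{\cal K}':{\cal K}_{m_0}]$ for an extension ${\cal K}'$ over which $E$ has good reduction, by N\'eron--Ogg--Shafarevich applied over ${\cal K}'$); a finite subgroup of a torsion-free group is trivial, and N\'eron--Ogg--Shafarevich then gives good reduction over ${\cal K}_{m_0}$ (note $\ell\ne p$ is automatic here, since $E$ is multiplicative at $p$). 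Your steps, correctly fleshed out, amount to exactly this: ``no nontrivial finite-order element of ${\rm GL}_2(\Z_p)$ acts trivially on $E[p^{m_0}]$'' is precisely the torsion-freeness of $1+p^{m_0}M_2(\Z_p)$, and only finiteness of the inertia image is needed, not its precise structure.

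However, the detail you single out as ``where the real content lies'' is wrong. You claim $m_0=2$ is needed at $p=2$ because ``an automorphism of order $3$ or $6$ can act on $E[2]$ with a fixed nonzero vector, whereas on $E[4]$ it cannot act trivially.'' The opposite is true: an element of order $3$ in ${\rm GL}_2(\Z_2)$ can never be congruent to $1$ mod $2$, because the kernel $1+2M_2(\Z_2)$ of reduction is a pro-$2$ group and so has only $2$-primary torsion; its image in ${\rm GL}_2(\F_2)\simeq S_3$ is a $3$-cycle on the three nonzero vectors of $E[2]$ and fixes none of them (and the order-$3$ part of an order-$6$ element behaves the same way). The genuine obstruction at $p=2$ is $2$-torsion: $-1$, and more generally conjugates of ${\rm diag}(1,-1)=1+2\,{\rm diag}(0,-1)$, lie in $1+2M_2(\Z_2)$, i.e.\ act trivially on $E[2]$ but not on $E[4]$; this is exactly why $1+2M_2(\Z_2)$ fails to be torsion-free while $1+4M_2(\Z_2)$ is torsion-free, and why $m_0=1$ fails for $p=2$. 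A second inaccuracy: the inertia image for potentially good reduction need not be cyclic of order in $\{1,2,3,4,6\}$ when $\ell\in\{2,3\}$ --- wild inertia can produce nonabelian images such as $Q_8$ or ${\rm SL}_2(\F_3)$ --- although individual element orders still lie in that set, so an element-by-element argument survives while your stated structural claim does not. Finally, retreating to ``just cite Proposition 4.7 of \cite{Gro}'' is not a proof of the verification you were asked for; the paper deliberately gives the short self-contained torsion-freeness argument above instead of matching normalizations with the reference.
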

\begin{proof} 
Put $q=p^{m_0}$. 
Let ${\cal K}_{m_0}$ the completion of $K_{m_0}$ at a prime $\l$ above $\ell$. 
Let $\rho_{E,p}$ be the $p$-adic Galois representation 
from $G_\Q$ to ${\rm GL}_2(\Z_p)$ 
associated to the $p$-adic Tate module $T_p(E)$. 
It is easy to see that $\rho_{E,p}(G_{K_{m_0}})=1+qM_2(\Z_p)$ is a torsion-free, pro-$p$ group.   
If the restriction mapping $\rho_{E,p}|_{I_{{\cal K}_{m_0}}}$ is non-trivial, the order of $\rho_{E,p}(I_{{\cal K}_{m_0}})$ becomes infinite. 
Since $E$ has potentially good reduction at $\ell$, 
there exists a finite extension ${\cal K}^\prime/{\cal K}_{m_0}$ such that 
$E/{\cal K}^\prime$ has good reduction. 
Thus $|~\rho_{E,p}(I_{{\cal K}_{m_0}})~|$ is less than or equal to 
$[{\cal K}^\prime:{\cal K}_{m_0}]$. 
This gives a contradiction. 

Hence $\rho_{E,p}|_{I_{{\cal K}_{m_0}}}$ is trivial and $E/{\cal K}_{m_0}$ 
has good reduction. 

%When $p=2$, it is easy to see that any torsion elements in $1+2M_2(\Z_2)$ has order at most 2. 
%Since the action of $I_{{\cal K}_{1}}$ on $T_2(E)$ is tame 
%and $\rho_{E,p}(G_{K_1})$ is a pro-2 group, 
%the image $\rho_{E,p}(I_{{\cal K}_{1}})$ is a (topologically) cyclic group. 
%Therefore 
%$\rho_{E,p}(I_{{\cal K}_{1}})$ is a finite of order at most 2 
%or it is of infinite order. 
%The latter case can not happen as we have seen before. 

%Assume that ${\rm ord}_\ell(\Delta)$ is even. 

%Thus if 
%$\rho_{E,p}(I_{{\cal K}_{1}})$ is non-trivial, 
%we can find a quadratic extension 
%$\widetilde{K}_1/K_1$ so that $\rho_{E,p}(I_{\widetilde{{\cal K}}_{1}})$ is trivial, where $\widetilde{{\cal K}}_{1}$ is the completion of $\widetilde{K}_1$ at a prime lying above $\l$. 
\end{proof}

Assume that $(n,p)\not=(1,2)$. 
Let $I_{\l}$ the inertia subgroup of ${\rm Gal}(L_n/K_n)$ at 
a prime $\l$ of $K_n$ lying above $\ell$ with ${\rm ord}_\ell(N)\ge 2$, 
where $N$ is the conductor of $E$. 
Put $I_\ell:=\langle~I_{\l} \ |\ \l|\ell~\rangle $. 
Let ${\cal K}_{n}$ the completion of $K_n$ at $\l$ and $R$ be the ring of integers of ${\cal K}_{n}$. 

By Lemma \ref{120317c}, $E/{K_n}$ has good reduction at $\l$ and then one can take the N\'eron model 
$\mathcal{E}$ of $E$ over ${\cal K}_{n}$. 
By basic properties of N\'eron models 
(cf.\ p.\ 12, Definition 1 and p.\ 16, Corollary 2 of \cite{blr}), 
we have the reduction map  
$E({\cal K}_{n})=\mathcal{E}(R)\stackrel{{\rm red}}{\lra}
\widetilde{E}_{\l}(\F_{\l})$, 
where 
$\widetilde{E}_{\l}$ is the reduction of $E$ at $\l$. 
Then for any $\sigma$ in $I_{\l}$ and $P$ in $E(\overline{{\cal K}}_{n})$ 
we see that 
${\rm red}({}^\sigma P)={\rm red}(P)$. 
Thus ${\rm red}\circ\Phi_n(I_\ell)=\{0\}$ by the definition of 
the $G_n$-isomorphism $\Phi_n$ from ${\rm Gal}(L_n/K_n)$ to $E[p^n]^r$. 
It follows from 
$$E[p^n]^r\stackrel{{\rm red}\atop \sim}{\lra} \widetilde{E}_{\l}[p^n]^r,$$
that $\Phi_n(I_{\l})=\{0\}$ for any $\l$ dividing $\ell$. 
Hence we have $|~I_\ell~|=1$.  

The remaining case is $(n,p)=(1,2)$. 
Since the ramification at $\l$ is tame, $I_\l$ is cyclic. 
Thus we may assume $L_n/K_n(T_1)$ is 
unramified at any prime lying above $\ell$. 
Since $\mbox{Gal}(K_1(T_1))/K_1)\simeq E[2]$, 
we have $|~I_\ell~| \le 2^2$. 

If $l$ is a potentially good prime, we put 
$\nu_\ell=1$ or $0$ according as $(n,p)=(1,2)$ or not. 
Then $|~I_\ell~| \le 2^{\nu_\ell}$. 
\section{Proof of Theorem \ref{main}}
Let us keep our notation in \S 3 and assumptions in Theorem \ref{main}. 
Let $I$ be the subgroup of ${\rm Gal}(L_n/K_n)$ generated by all $I_\ell$ satisfying $\ell|N$, 
where $N$ is the conductor of $E$.  
Put 
$$
s:=\sum_{\ell\ne p}\nu_\ell 
$$
for simplicity. 

We first assume that $p$ is odd. 
We note that $\mbox{Gal}(L_n/K_n)$ is abelian. 
By applying the results in \S 3 and \S 4,     
we have 
$$|~I~|\le \prod_{\ell |N}|~I_\ell~|=\prod_{{\rm ord}_\ell(N)=1}|~I_\ell~|
 \le  p^{2n+2s}.$$
Thus we have
$$
[L_n \cap K^{{\rm ur}}_n:K_n]=\frac{[L_n:K_n]}{[L_n:L_n^I]}
\geq \frac{p^{2nr}}{p^{2n+2s}}=p^{2n(r-1)-2s}
$$
for any $n\geq 1$. 
Here we use $|I_p|\le p^{2n}$ for simplicity. 

Next we assume that $p=2$. 
The constant $r_{2,n}$ and $\delta_2$ are due to Theorem \ref{est1}. 
Then we have 
$$|~I~|\le 2^{2n+2(r_{2,n}-2)+\delta_2+2s}$$
and 
$$
[L_n \cap K^{{\rm ur}}_n:K_n]=\frac{[L_n:K_n]}{[L_n:L_n^I]}
\geq \frac{2^{2nr}}{2^{2n+\delta_2+2s}}
=2^{2n(r-1)-2(r_{2,n}-2)-\delta_2-2s}
$$
for any $n\geq 1$. 

This completes a proof of Theorem \ref{main}.

We define the integer $\nu \ge 0$ by (\ref{1101a}). 
Then $|~I_p~|=p^{2(n-\nu)}$ holds for $n>\nu$, 
and $|~I_p~|=1$ holds for $n\leq \nu$.
Thus our main theorem improves as follows: 
$$
|I|\le p^{2(n-\nu)+2s},\quad 
[L_n \cap K^{{\rm ur}}_n:K_n]\geq p^{2n(r-1)+2\nu-2s}
$$
for $n>\nu$;
$$
|I|\le p^{2s},\quad 
[L_n \cap K^{{\rm ur}}_n:K_n]\geq p^{2nr-2s}
$$
for $n\le \nu$. 

Next, we give a proof of Corollary \ref{cor}. 
If the conductor of $E$ is equal to a prime $p$, 
we have $p\geq 11$, $\Delta~|~p^5$, and 
$G_n\simeq \mbox{GL}_2(\Z/p^n\Z)$ for $n\geq 1$ 
(cf.\ \cite{SY}). 
Thus the assumptions of Theorem \ref{main} hold in this case. 

%We define the integer $\nu \ge 0$ by (\ref{1101a}). 
Since the conductor is equal to $p$, 
we have $|~I~|=|~I_p~|$ 
%By Theorem \ref{est1} we have $|~I_p~|=p^{2(n-\nu)}$ holds for $n>\nu$, 
%and $|~I_p~|=1$ holds for $n\leq \nu$. 
%Since 
%$$
%[L_n \cap K^{{\rm ur}}_n:K_n]=\frac{[L_n:K_n]}{[L_n:L_n^I]}=
%\frac{p^{2nr}}{|~I~|},
%$$
and $s=0$. Thus
we have
$$
\kappa_n=\left\{ \begin{array}{ll}2n(r-1)+2\nu&(n>\nu)\\
2nr&(n\leq \nu).
\end{array}
\right.
$$
This completes the proof. 
\section{$L_1\cap K_\infty=K_1$ for $p=2$}
Let the notations be the same as in \S 2. 
Put $N_1:=L_1\cap K_\infty$. 
Since $N_1/K_1$ is a $G_1$-extension contained in $L_1/K_1$, 
the Galois group $\mbox{Gal}(N_1/K_1)$ is isomorphic to the direct product of some copies of $E[p]$. By our previous paper \cite{SY} the equation $N_1=K_1$ holds for $p>2$. 

In this section, we prove $N_1=K_1$ in the case of $p=2$. 

Put $H_n:=1+p^nM_2(\Z_p)$ for any $n\ge 1$. 
It is isomorphic to ${\rm Gal}(K_\infty/K_n)$ since  
$G_n\simeq {\rm GL}_2(\Z/p^n\Z)$. 
Contrary to the case of $p>2$, we have the issues that the equality $H^2_1=H_2$ does not hold and $H_1/H_2\simeq 
M_2(\Z/2\Z)$ contains $E[2]$ as an irreducible $G_1$-quotient. 
To obtain $N_1=K_1$ in the case of $p=2$ we need more careful analysis.  

\subsection{Maximal abelian extension of $K_1$ in $K_\infty$}
In this subsection we prove $N_1\subset K_2$. 

Instead of $H^2_1$ we consider the subgroup $\mathcal{H}$ of $H_1$ generated by $H^2_1$. 
It is easy to see that $\mathcal{H}$ is a normal subgroup of $H_1$ (and also of ${\rm GL}_2(\Z_2)$). 
Since $H_1/{\cal H}$ is of exponent two, $H_1/{\cal H}$ is an abelian group. 

By the Legendre formula the inequality
$$
\mu\left(\begin{bmatrix}\frac{1}{2}\\j\end{bmatrix}8^j\right)
=-j-\mu(j!)+3j\ge -j-\frac{j}{2-1}+3j=j
$$
holds for $j\ge 0$. Thus 
$$
(1+8M)^{\frac{1}{2}}=\sum_{j=0}^{\infty}\begin{bmatrix}\frac{1}{2}\\j\end{bmatrix}(8M)^j=1+4M-8M^2+\cdots 
$$
converges in $H_2$ for any matrix $M$ in $M_2(\Z_p)$. 
We have $H_2^2=H_3$ and 
$$H_2 \supset \mathcal{H} \supset H_1^2 \supset H_3.$$ 

Since $\det h^2\equiv 1 \bmod{8}$ holds for any $h$ in $H_1$, 
$\det g\equiv 1 \bmod{8}$ holds for any $g$ in ${\cal H}$, 
By direct computation we can check 
$$
{\cal H}=\{g\in H_2~|~\det g \equiv 1 \bmod{8}\}.
$$
We have $[H_2:{\cal H}]=2$ and $[H_1:{\cal H}]=2^5$. 
We can also check $H_3$ is a normal subgroup of $\mathcal{H}$.  

\begin{Lem}\label{even-case2}$N_1\subset K_2$ holds. 
\end{Lem}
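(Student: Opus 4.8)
The plan is to place $N_1$ inside the maximal elementary abelian $2$-extension of $K_1$ contained in $K_\infty$, and then to exploit that $E[2]$ is an irreducible \emph{nontrivial} $G_1$-module in order to force $N_1$ down into $K_2$. Throughout I identify $G_1\simeq{\rm GL}_2(\Z/2\Z)$ acting by conjugation on the subquotients of $H_1\simeq{\rm Gal}(K_\infty/K_1)$.

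First I would set $M:=K_\infty^{\mathcal H}$. Since $\mathcal H=\langle H_1^2\rangle$ is normal in $H_1$ and $H_1/\mathcal H$ is abelian of exponent $2$ (both already established above), the field $M$ is Galois over $K_1$ with ${\rm Gal}(M/K_1)\simeq H_1/\mathcal H$, and it is the \emph{maximal} subextension of $K_\infty/K_1$ whose Galois group is elementary abelian of exponent $2$: any such quotient of $H_1$ must kill every square, hence $\mathcal H$. As recorded at the start of this section, ${\rm Gal}(N_1/K_1)$ is a direct product of copies of $E[2]\simeq(\Z/2\Z)^2$, so it is elementary abelian of exponent $2$, and therefore $N_1\subseteq M$. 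Moreover $\mathcal H\subset H_2$ with $[H_2:\mathcal H]=2$ gives $K_2\subseteq M$ and $[M:K_2]=2$, with ${\rm Gal}(M/K_2)\simeq H_2/\mathcal H$.

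Next I would compute the $G_1$-action on the line ${\rm Gal}(M/K_2)=H_2/\mathcal H$. Writing $g=1+4Y$ with $Y\in M_2(\Z_2)$, one has $\det(1+4Y)\equiv 1+4\,\mbox{tr}(Y)\pmod 8$, so by the description $\mathcal H=\{g\in H_2\mid \det g\equiv 1\pmod 8\}$ the nontrivial class of $H_2/\mathcal H$ is detected exactly by $\mbox{tr}(Y)\bmod 2$. Since conjugation by (a lift of) $h\in G_1$ sends $1+4Y$ to $1+4\,hYh^{-1}$ and the trace is conjugation invariant, $G_1$ acts trivially on $H_2/\mathcal H$; that is, ${\rm Gal}(M/K_2)$ is the trivial $G_1$-module. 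I expect this to be the main obstacle, because it is precisely the $p=2$ phenomenon flagged in the introduction of this section ($H_1^2\neq H_2$, with the extra layer $M/K_2$) that must be shown to be of trivial type rather than of $E[2]$-type.

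Finally I would combine the two. The inclusion $N_1\subseteq M$ corresponds to a surjection of $G_1$-modules $\pi\colon H_1/\mathcal H\twoheadrightarrow{\rm Gal}(N_1/K_1)$, and its restriction to the submodule ${\rm Gal}(M/K_2)=H_2/\mathcal H$ is a $G_1$-homomorphism from the trivial module into ${\rm Gal}(N_1/K_1)$, a direct sum of copies of $E[2]$. As $E[2]$ is an irreducible nontrivial $G_1$-module, such a sum has no trivial submodule, so ${\rm Hom}_{G_1}(\mbox{trivial},{\rm Gal}(N_1/K_1))=0$ and this restriction vanishes. Hence ${\rm Gal}(M/K_2)\subseteq\ker\pi={\rm Gal}(M/N_1)$, and the Galois correspondence yields $N_1\subseteq K_2$, as desired.
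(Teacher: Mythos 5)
Your proof is correct, but it takes a genuinely different route from the paper's. Both arguments start identically: since ${\rm Gal}(N_1/K_1)$ has exponent two, ${\rm Gal}(K_\infty/N_1)\supset\mathcal{H}$ (in your language, $N_1\subseteq M=K_\infty^{\mathcal{H}}$). From there the paper argues by contradiction with an index count: if ${\rm Gal}(K_\infty/N_1)\cap H_2=\mathcal{H}$, then $[H_1:{\rm Gal}(K_\infty/N_1)H_2]=2$ or $2^3$; but $H_1/({\rm Gal}(K_\infty/N_1)H_2)$ is a $G_1$-quotient of ${\rm Gal}(N_1/K_1)\simeq E[2]^s$, hence a direct sum of copies of the irreducible module $E[2]$ and so of order a power of $4$ --- a contradiction; therefore ${\rm Gal}(K_\infty/N_1)\supset H_2$. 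You instead pin down the $G_1$-module structure of the extra layer itself: the computation $\det(1+4Y)\equiv 1+4\,\mathrm{tr}(Y)\pmod 8$ together with conjugation-invariance of the trace shows that $H_2/\mathcal{H}$ is the \emph{trivial} $G_1$-module, and since $E[2]^s$ has no nonzero $G_1$-fixed vectors ($E[2]$ being irreducible and nontrivial), the restriction map ${\rm Gal}(M/K_1)\twoheadrightarrow{\rm Gal}(N_1/K_1)$ must kill $H_2/\mathcal{H}$, giving $N_1\subseteq K_2$ directly. The paper's route is cheaper: it uses only $[H_2:\mathcal{H}]=2$ and a parity-of-order argument, with no need to know how $G_1$ acts on $H_2/\mathcal{H}$. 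Yours avoids the case analysis and the contradiction, and it explains structurally \emph{why} the layer $M/K_2$ cannot appear inside $N_1$: it is of trivial isotype while ${\rm Gal}(N_1/K_1)$ is $E[2]$-isotypic. Both proofs ultimately rest on the same input, the irreducibility and nontriviality of $E[2]$ as a $G_1$-module.
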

\begin{proof}
Since $\mbox{Gal}(N_1/K_1)$ is of exponent two, 
we have
$$
H_1\supset \mbox{Gal}(K_\infty/N_1)\supset {\cal H}.
$$
It follows from $[H_2:{\cal H}]=2$ that 
$\mbox{Gal}(K_\infty/N_1)\cap H_2$ equals to either $H_2$ or ${\cal H}$. 

Suppose that $\mbox{Gal}(K_\infty/N_1)\cap H_2={\cal H}$, 
Then 
$$[H_2:\mbox{Gal}(K_\infty/N_1)\cap H_2]=
[H_2\mbox{Gal}(K_\infty/N_1):\mbox{Gal}(K_\infty/N_1)]=2$$
holds. 
Since $\mbox{Gal}(N_1/K_1)$ is isomorphic to the direct product of some copies of $E[2]$, $[H_1:\mbox{Gal}(K_\infty/N_1)]=2^2,~2^4$ 
and thus 
$[H_1:\mbox{Gal}(K_\infty/N_1)H_2]=2,~2^3$. 
This contradicts that $E[2]$ is irreducible $G_1$-module. 

Therefore $\mbox{Gal}(K_\infty/N_1)\cap H_2=H_2$. 
Now we have $\mbox{Gal}(K_\infty/N_1)\supset H_2$ and 
$N_1 \subset K_2$. 
\end{proof}
\subsection{}
In this subsection we prove $\mbox{Gal}(K_2/N_1)=V_2^{(1)},~V_4$ 
by using the notations in Lemma \ref{submodules}. 

We study the ${\rm GL}_2(\Z/2\Z)$-module $M_2(\Z/2\Z)$ as below. 
\begin{Lem}\label{submodules}
There are exactly four non-trivial ${\rm GL}_2(\Z/2\Z)$-submodules of $V_4:=M_2(\Z/2\Z)$ and they are given by 
$$V_1=\langle\begin{pmatrix}  
1 & 0 \\
0 & 1
\end{pmatrix}\rangle,\ V^{(1)}_2=\langle\begin{pmatrix}  
0 & 1 \\
1 & 1
\end{pmatrix},\ \begin{pmatrix}  
1 & 1 \\
1 & 0
\end{pmatrix}\rangle 
,\ V^{(2)}_2=\langle \begin{pmatrix}  
1 & 1 \\
0 & 1
\end{pmatrix},\ \begin{pmatrix}  
1 & 0 \\
1 & 1
\end{pmatrix}\rangle,$$
and $V_3=M_2(\Z/2\Z)^{{\rm tr}=0}$. 
The relations  
$V_4=V_2^{(1)}\oplus V_2^{(2)}$ and  
$V_2^{(2)}\subset V_3$, $V_1\subset V_2^{(1)}$ holds. 
Further only isotypic $G_1$-quotient modules of $M_2(\Z/2\Z)$ are $V_4/V_3\simeq \Z/2\Z,\ V^{(1)}_2/V_1\simeq \Z/2\Z$ and $V^{(2)}_2\simeq 
(\Z/2\Z)^{\oplus 2}$.  
\end{Lem}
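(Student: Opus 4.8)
The plan is to analyze the conjugation action of $G_1\cong\mathrm{GL}_2(\Z/2\Z)$ on $V_4=M_2(\Z/2\Z)$ by combining the trace map with the conjugacy-class structure, which is completely transparent since $\mathrm{GL}_2(\Z/2\Z)\cong S_3$ has only six elements. First I would record the two building blocks: the trivial module $\Z/2\Z$ and the two-dimensional standard module $St=E[2]$, on which an order-three element acts with no nonzero fixed vector. These are non-isomorphic and, being the only irreducible $\F_2[S_3]$-modules (the $2$-regular classes being the identity and the $3$-cycles), every composition factor of $V_4$ is one of them. The trace $\mathrm{tr}\colon V_4\to\Z/2\Z$ is $G_1$-equivariant and surjective, so its kernel $V_3$ (the trace-zero matrices) is a codimension-one submodule; the characteristic-two phenomenon $\mathrm{tr}(I)=2=0$ places $V_1=\langle I\rangle$ inside $V_3$, and this containment is what forces the delicate case analysis later.

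Next I would decompose $V_3$. The three matrices of trace $0$ and determinant $1$ (equivalently, $I$ plus the three nonzero nilpotents) form a single conjugacy class, their pairwise sums again lie among them, and together with $0$ they span the two-dimensional submodule $V_2^{(2)}$; the transitive permutation of its three nonzero vectors is precisely the action of $\mathrm{GL}_2(\Z/2\Z)$ on $\F_2^2\setminus\{0\}$, so $V_2^{(2)}\cong St$ is irreducible. Since $V_1\cap V_2^{(2)}=0$ and $\dim V_1+\dim V_2^{(2)}=3=\dim V_3$, I obtain $V_3=V_1\oplus V_2^{(2)}\cong(\text{trivial})\oplus St$. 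As these summands are non-isomorphic simple modules of multiplicity one, the only submodules of $V_3$ are $0,\ V_1,\ V_2^{(2)},\ V_3$.

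The remaining task is to list the submodules $W$ not contained in $V_3$, i.e.\ with $\mathrm{tr}(W)=\Z/2\Z$. For such $W$ the intersection $W\cap V_3$ is one of the four submodules just found and $\dim W=\dim(W\cap V_3)+1$. The case $W\cap V_3=V_3$ gives $W=V_4$; the case $W\cap V_3=0$ is impossible, since it would require a $G_1$-fixed vector of trace $1$, whereas the fixed points are the scalars $\{0,I\}$, all of trace $0$. If $W\cap V_3=V_1$, then $W=V_1\cup(x+V_1)$ for a trace-one $x$ whose conjugacy class must fit in the two-element coset $\{x,x+I\}$; only the size-two class of order-three matrices qualifies, and it yields exactly $V_2^{(1)}=\langle I,\ \text{an order-3 matrix}\rangle$. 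Finally $W\cap V_3=V_2^{(2)}$ is ruled out: the size-six trace-one class cannot fit into the four-element coset $x+V_2^{(2)}$, while the two order-three matrices differ by $I\notin V_2^{(2)}$ and hence lie in distinct cosets. This leaves exactly the four nontrivial submodules $V_1,V_2^{(1)},V_2^{(2)},V_3$.

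The stated relations then follow at once: $V_2^{(2)}\subset V_3$ and $V_1\subset V_2^{(1)}$ by construction, while $V_2^{(1)}\cap V_2^{(2)}=0$ with dimensions summing to $4$ gives $V_4=V_2^{(1)}\oplus V_2^{(2)}$. For the isotypic subquotients I would read off $V_4/V_3\cong\Z/2\Z$ and $V_2^{(1)}/V_1\cong\Z/2\Z$ (both of trivial type) and $V_2^{(2)}\cong St\cong(\Z/2\Z)^{\oplus2}$ (standard type), noting that $V_2^{(1)}$ is itself the non-split, hence non-isotypic, self-extension of the trivial module, since an order-two conjugation swaps the two order-three matrices and sends $x\mapsto x+I$; this explains why no further isotypic quotient appears. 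The main obstacle is the characteristic-two subtlety that $V_1\subset V_3$, which is what forces the four-way split of the analysis of $W\cap V_3$; the single most delicate step is eliminating the case $W\cap V_3=V_2^{(2)}$, where the conjugacy-class sizes (six and two) must be played against the coset structure.
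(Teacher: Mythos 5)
Your proof is correct, but it takes a genuinely different route from the paper's. The paper's entire proof is one sentence: since ${\rm GL}_2(\Z/2\Z)$ is generated by $\begin{pmatrix}0&1\\1&0\end{pmatrix}$ and $\begin{pmatrix}0&1\\1&1\end{pmatrix}$, one ``computes the orbit decomposition of $M_2(\Z/2\Z)$ under the actions of these two elements'' --- i.e.\ a brute-force listing of the conjugation orbits (sizes $1,1,3,3,2,6$), followed by an implicit check of which unions of orbits through $0$ are additive subgroups. You use the same computational core (the orbit structure) but organize it representation-theoretically: the classification of the irreducible $\F_2[S_3]$-modules via $2$-regular classes, the equivariant trace map cutting out $V_3$, the decomposition $V_3=V_1\oplus V_2^{(2)}$ into non-isomorphic simples (which immediately gives that $V_3$ has exactly four submodules), and then the coset-size versus orbit-size argument showing that the only submodules mapping onto $\Z/2\Z$ under the trace are $V_2^{(1)}$ and $V_4$ itself. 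What each approach buys: the paper's check is shorter to state but unilluminating, and it leaves the additive-closure verification entirely to the reader; your route explains structurally why exactly one ``extra'' submodule $V_2^{(1)}$ occurs (it is the non-split self-extension of the trivial module, a genuinely characteristic-two phenomenon), and it feeds directly into the isotypic-quotient assertion, which the paper's proof does not address at all. Two small repairs: your phrase ``the three matrices of trace $0$ and determinant $1$'' should exclude $I$, which also has trace $0$ and determinant $1$ in characteristic two --- the parenthetical description as $I$ plus the three nonzero nilpotents is the accurate one; and for the claim that no \emph{further} isotypic quotients exist, you should state explicitly that the remaining quotients $V_4$ and $V_4/V_1$ each contain both a trivial and a standard composition factor, hence are not isotypic --- this is immediate from your decomposition $V_4=V_2^{(1)}\oplus V_2^{(2)}$, but it is the case your last sentence glosses over.
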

\begin{proof}
Since ${\rm GL}_2(\Z/2\Z)$ is generated by $\begin{pmatrix}  
0 & 1 \\
1 & 0
\end{pmatrix}$ and $\begin{pmatrix}  
0 & 1 \\
1 & 1
\end{pmatrix}$, we have only to compute the orbit decomposition of $M_2(\Z/2\Z)$ under the actions of these two elements. 
\end{proof}
As in the proof of Lemma 2.2 of \cite{SY}, 
the $G_1$-module ${\rm Gal}(N_1/K_1)$ is isomorphic to a copy of 
the irreducible $G_1$-module $E[2]$. 
By Lemma \ref{submodules} we have $\mbox{Gal}(K_2/N_1)=V_2^{(1)},~V_4$. 
In particular, we have $\mbox{Gal}(N_1/K_1)\simeq \{0\},~E[2]$. 
\subsection{The proof of $N_1=K_1$}
In this subsection we decide the inertia group of 
a prime ideal lying above 2 in $K_2$ over $\Q$ 
and we give a proof of $N_1=K_1$.  

Put ${\cal K}_1=\Q_2(E[2])$ and ${\cal K}_2=\Q_2(E[4])$. 
Since $E$ has multiplicative reduction, 
there exists some $q$ in $2\Z_2$ such that $E$ is isomorphic to 
the Tate curve $E_q$ over the unramified extension $\cal M$ of $\Q_2$ 
for ${\cal M}=\Q_2,~\Q_2(\sqrt{-3})$. 
It follows from 
$$
\Delta=q\prod_{n\ge 1}(1-q^n)^{24}
$$
(cf. \cite{sil},p.356)
that 
$$\Q_2(E_q[2])=\Q_2(\sqrt{q})=\Q_2(\sqrt{\Delta}),\quad 
\Q_2(E_q[4])=\Q_2(\sqrt[4]{q},\zeta_4)=\Q_2(\sqrt[4]{\Delta},\zeta_4).$$ 
Since $\mbox{ord}_2(q)$ is odd, 
$\Q_2(\sqrt{q})/\Q_2$ is a totally ramified extension of degree two. 
$\Q_2(\sqrt[4]{q},\zeta_4)/\Q_2$ 
is a totally ramified extension of degree eight. 

Suppose ${\cal M}=\Q_2(\sqrt{-3})$. 
Put $\varphi$ is an isomorphism from $E$ to $E_q$. 
Then ${}^\sigma \varphi=\varphi\circ [-1]_E$ for 
the generator $\sigma$ of $\mbox{Gal}({\cal M}/\Q_2)$. 
Since $\Q_2(\sqrt[4]{q},\zeta_4)/\Q_2$ is totally ramified 
and ${\cal M}/\Q_2$ is unramified, $\Q_2(\sqrt[4]{q},\zeta_4)\cap{\cal M}=\Q_2$. Thus we can prolong $\sigma$ from $\mbox{Gal}({\cal M}/\Q_2)$ to 
$\mbox{Gal}({\cal M}(E[4])/\Q_2)$ such that 
$\sigma$ is the identity on $\Q_2(\sqrt[4]{q},\zeta_4)$. 
For $P$ in $E[4]$ we have 
$$
\varphi(P)={}^\sigma \varphi(P)=\varphi\circ [-1]_E({}^\sigma P).
$$
Thus we have ${}^\sigma P=[-1]_E(P)$. 
Therefore 
$$
\Q_2(E[2])=\Q_2(E_q[2]),\quad \Q_2(E[4])={\cal M}(E_q[4]).
$$

Now we have the following lemma. 

\begin{Lem}
Assume that $G_n\simeq {\rm GL}_2(\Z/2^n\Z)$ for $n=1,2$. 
Then we have
$$
{\cal K}_1=\Q_2(\sqrt{q}),\quad 
{\cal K}_2={\cal M}(\sqrt[4]{q},\zeta_4).
$$
\end{Lem}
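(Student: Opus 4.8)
The plan is to compute $\mathcal{K}_1=\Q_2(E[2])$ and $\mathcal{K}_2=\Q_2(E[4])$ by transporting the $2$-power torsion of $E$ to that of the Tate curve $E_q$ along the isomorphism $\varphi$, and then reading off the torsion fields of $E_q$ from the explicit descriptions $\Q_2(E_q[2])=\Q_2(\sqrt q)$ and $\Q_2(E_q[4])=\Q_2(\sqrt[4]{q},\zeta_4)$ already recorded above. Since $\varphi$ is defined over the unramified quadratic $\M$ (which is $\Q_2$ or $\Q_2(\sqrt{-3})$), I would split the argument according to these two possibilities for $\M$.

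When $\M=\Q_2$ the map $\varphi$ is $\Q_2$-rational, hence $G_{\Q_2}$-equivariant, so it identifies $E[2^n]$ with $E_q[2^n]$ as Galois modules and gives $\Q_2(E[2^n])=\Q_2(E_q[2^n])$ for $n=1,2$. The recorded identities then yield $\mathcal{K}_1=\Q_2(\sqrt q)$ and $\mathcal{K}_2=\Q_2(\sqrt[4]{q},\zeta_4)$; as $\M=\Q_2$ the latter is visibly $\M(\sqrt[4]{q},\zeta_4)$, which is the claim. When $\M=\Q_2(\sqrt{-3})$ I would invoke the computation performed just before the statement: for the prolongation $\sigma$ of the nontrivial automorphism of $\M/\Q_2$ that fixes $\Q_2(\sqrt[4]{q},\zeta_4)$ one has ${}^\sigma P=[-1]_E(P)$ for all $P\in E[4]$, from which $\Q_2(E[2])=\Q_2(E_q[2])=\Q_2(\sqrt q)$ and $\Q_2(E[4])=\M(E_q[4])=\M(\sqrt[4]{q},\zeta_4)$. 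Combining the two cases gives the asserted equalities.

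I expect the only genuine difficulty to be the descent in the case $\M=\Q_2(\sqrt{-3})$, where $\varphi$ is not $\Q_2$-rational and one cannot identify the torsion modules directly over $\Q_2$. The key is the dichotomy produced by the twist $[-1]_E$: at level $2$ the map $[-1]_E$ is the identity, so $\sigma$ fixes $E[2]$ and therefore $\Q_2(E[2])$ is contained in the fixed field $\Q_2(\sqrt q)$ of $\sigma$, which pins it down since $\M(E[2])=\M(\sqrt q)$ has degree $4$; at level $4$ the map $[-1]_E$ is nontrivial, so $\sigma$ moves $E[4]$ while fixing $\Q_2(\sqrt[4]{q},\zeta_4)$, forcing $\sqrt{-3}\in\Q_2(E[4])$ and hence $\Q_2(E[4])=\M(E_q[4])$. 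The one point that must be checked to make this rigorous is that no element of ${\rm Gal}(\M(E_q[4])/\M)$ can act as $[-1]_E$ on $E[4]$: this holds because that Galois group acts on $E_q[4]$ through the upper-triangular (Borel) matrices attached to the Tate parametrization, whose image never contains $-{\rm Id}$. This is exactly what rules out the competing scenario in which $\Q_2(E[4])$ would be a degree-$8$ field linearly disjoint from $\M$.
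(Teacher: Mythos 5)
Your proposal follows essentially the same route as the paper: the paper's ``proof'' of this lemma is exactly the discussion preceding it, namely the identities $\Q_2(E_q[2])=\Q_2(\sqrt{q})$ and $\Q_2(E_q[4])=\Q_2(\sqrt[4]{q},\zeta_4)$, the twist relation ${}^\sigma\varphi=\varphi\circ[-1]_E$ when ${\cal M}=\Q_2(\sqrt{-3})$, the prolongation of $\sigma$ chosen to fix $\Q_2(\sqrt[4]{q},\zeta_4)$, and the resulting formula ${}^\sigma P=[-1]_E(P)$ on $E[4]$; the case ${\cal M}=\Q_2$ is immediate in both treatments. What you add is the final deduction, which the paper compresses into a single ``Therefore'': at level $2$ the containment in the fixed field of $\sigma$ plus the degree count ${[{\cal M}(\sqrt{q}):\Q_2]=4}$, and at level $4$ the observation that the only competing scenario is a degree-$8$ field linearly disjoint from ${\cal M}$, which exists if and only if some $\tau\in{\rm Gal}({\cal M}(E_q[4])/{\cal M})$ acts as $-{\rm Id}$ on $E_q[4]$. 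That identification of the key verification is correct and is exactly what the paper's terse conclusion needs.

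However, your stated reason for excluding $-{\rm Id}$ is, as written, not sufficient: the Borel subgroup of ${\rm GL}_2(\Z/4\Z)$ \emph{does} contain $-{\rm Id}$, since $-{\rm Id}$ is diagonal, so ``the action is upper-triangular'' cannot by itself rule anything out. What does the work is the finer shape of the Tate-parametrization action: with respect to the basis $(\zeta_4,\,q^{1/4})$ of $E_q[4]$, every $\tau\in{\rm Gal}({\cal M}(E_q[4])/{\cal M})$ acts by a matrix $\begin{pmatrix}\chi_4(\tau) & c(\tau)\\ 0 & 1\end{pmatrix}$, where the lower-right entry is $1$ because $\tau(q^{1/4})$ is again a fourth root of $q$, hence equals $\zeta_4^{c}q^{1/4}$; equivalently, Galois preserves valuations and so acts trivially on the quotient $E_q[4]/\mu_4\simeq\Z/4\Z$. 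Since $1\not\equiv-1\pmod{4}$, no such matrix is $-{\rm Id}$. (A direct variant: $\tau(q^{1/4})\equiv q^{-1/4}\bmod q^{\Z}$ would force $q^{1/2}\in q^{\Z}$ by comparing valuations, which is absurd.) With the exclusion restated this way, your argument is complete and matches the paper's.
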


The inertia group in ${\cal K}_2/{\cal K}_1$ is equal to 
$\mbox{Gal}({\cal M}(\sqrt[4]{q},\zeta_4)/{\cal M}(\sqrt{q}))$. 
It is  generated by two elements:~
$$
\sqrt[4]{q}\mapsto \sqrt[4]{q},\quad \zeta_4\mapsto -\zeta_4
$$
and
$$
\sqrt[4]{q}\mapsto -\sqrt[4]{q},\quad \zeta_4\mapsto \zeta_4.
$$
Their matrix representation with respect to $E[4]$ is 
equal to those with respect to $E_q[4]=\langle \sqrt[4]{q},\zeta_4 \rangle$ 
and they are
$$
1+2\begin{bmatrix}0&0\\0&1\end{bmatrix},\quad 
1+2\begin{bmatrix}0&0\\1&0\end{bmatrix}, 
$$
respectively. 
By using 
$$
\left\langle 
\begin{bmatrix}0&0\\0&1\end{bmatrix},~\begin{bmatrix}0&0\\1&0\end{bmatrix}
\right\rangle
\cap V_2^{(1)}=\{0\}, 
$$
we have the following lemma.

\begin{Lem}
Assume that $G_n\simeq {\rm GL}_2(\Z/2^n\Z)$ for $n=1,2$. 
The fixed field of $V_2^{(1)}$ in $K_2/K_1$ is 
a totally ramified extension over $K_1$ of degree four. 
\label{1018c}
\end{Lem}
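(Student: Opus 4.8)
The plan is to read off both the degree and the ramification of the fixed field $F$ of $V_2^{(1)}$ directly from the $G_1$-module structure of $\mathrm{Gal}(K_2/K_1)$ and from the inertia subgroup at $2$ computed just above. First I would note that $\mathrm{Gal}(K_2/K_1)\simeq H_1/H_2$, and since $(1+2A)(1+2B)=1+2(A+B)+4AB\equiv 1+2(A+B)\bmod 4$, this group is abelian, isomorphic as a $G_1$-module to the additive group $V_4=M_2(\Z/2\Z)$. Hence $V_2^{(1)}$ is normal, $F/K_1$ is Galois with $\mathrm{Gal}(F/K_1)\simeq V_4/V_2^{(1)}$, and the degree is immediate:
$$[F:K_1]=\frac{|V_4|}{|V_2^{(1)}|}=\frac{2^4}{2^2}=4.$$
Only total ramification at $2$ then remains to be proved.

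Next I would identify the inertia subgroup at a prime above $2$. Because $\mathrm{Gal}(K_2/K_1)$ is abelian, the inertia groups of the various primes over $2$ coincide, so there is a well-defined inertia subgroup $I\subset V_4$. The computation preceding the lemma shows that the local inertia of $\mathcal{K}_2/\mathcal{K}_1=\mathcal{M}(\sqrt[4]{q},\zeta_4)/\mathcal{M}(\sqrt{q})$ is generated by two automorphisms whose matrix representations on $E[4]$ are $1+2\begin{bmatrix}0&0\\0&1\end{bmatrix}$ and $1+2\begin{bmatrix}0&0\\1&0\end{bmatrix}$. Since the global inertia at a prime of $K_1$ over $2$ is isomorphic to this local inertia, and since $1+2M\mapsto M\bmod 2$ identifies $H_1/H_2$ with $V_4$, I conclude
$$I=\left\langle \begin{bmatrix}0&0\\0&1\end{bmatrix},\ \begin{bmatrix}0&0\\1&0\end{bmatrix}\right\rangle,$$
a two-dimensional $\F_2$-subspace of $V_4$.

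The crux is then purely group-theoretic: $F/K_1$ is totally ramified at $2$ exactly when the inertia subgroup of $\mathrm{Gal}(F/K_1)$ is the whole group, and that inertia subgroup is the image of $I$ under the projection $V_4\twoheadrightarrow V_4/V_2^{(1)}$. So I must show $I+V_2^{(1)}=V_4$. Since $\dim_{\F_2}I=\dim_{\F_2}V_2^{(1)}=2$ and $\dim_{\F_2}V_4=4$, this is equivalent to $I\cap V_2^{(1)}=\{0\}$, which is precisely the transversality relation displayed just before the lemma. Therefore the image of $I$ fills $V_4/V_2^{(1)}=\mathrm{Gal}(F/K_1)$, the inertia group equals the full Galois group, and $F/K_1$ is totally ramified of degree $4$.

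I expect the only genuine obstacle to be the identification in the middle paragraph: that the global inertia of $\mathrm{Gal}(K_2/K_1)$ at a prime over $2$ really equals the local inertia read off from the Tate parametrization, and that the matrices acting on $E[4]$ agree with those acting on $E_q[4]=\langle\sqrt[4]{q},\zeta_4\rangle$. This rests on the equality $\Q_2(E[4])=\mathcal{M}(E_q[4])$ and the unramifiedness of $\mathcal{M}/\Q_2$ established in the preceding lemma; once it is in place, the degree count and the transversality $I\cap V_2^{(1)}=\{0\}$ finish the proof with no further calculation.
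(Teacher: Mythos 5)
Your proposal is correct and takes essentially the same route as the paper: the paper likewise identifies the inertia of ${\cal K}_2/{\cal K}_1={\cal M}(\sqrt[4]{q},\zeta_4)/\Q_2(\sqrt{q})$ with the subgroup of $V_4$ generated by $\begin{bmatrix}0&0\\0&1\end{bmatrix}$ and $\begin{bmatrix}0&0\\1&0\end{bmatrix}$, and deduces the lemma from the transversality relation $\left\langle \begin{bmatrix}0&0\\0&1\end{bmatrix},\begin{bmatrix}0&0\\1&0\end{bmatrix}\right\rangle\cap V_2^{(1)}=\{0\}$. You have only made explicit the bookkeeping the paper leaves implicit (the degree count $[F:K_1]=|V_4/V_2^{(1)}|=4$, the passage from local to global inertia, and the fact that the image of the inertia subgroup in $V_4/V_2^{(1)}$ being the full quotient is equivalent to total ramification).
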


We put $\Q_2N_1={\cal N}_1$. 
By (\ref{1018a}) and (\ref{1018b}) we have 
$$
{\cal N}_1\subset {\cal L}_1\subset {\cal M}(\sqrt{q},\zeta_4).
$$
Thus the ramification index of ${\cal N}_1/{\cal K}_1$ is at most two. 
By Lemma \ref{1018c} we see that 
$\mbox{Gal}(K_2/N_1)=V_2^{(1)}$ does not occur. 

Now we have $\mbox{Gal}(K_2/N_1)=V_4$ and $N_1=K_1$. 

\begin{Thm}\label{p=2}
The equality $N_1=K_1$ holds for $p=2$. 
\end{Thm}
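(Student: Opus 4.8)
The plan is to assemble the three preparatory results of this section into one descending chain, narrowing the $G_1$-module $\mbox{Gal}(K_2/N_1)$ until only the trivial possibility $N_1=K_1$ survives. I first record that $N_1/K_1$ is a subextension of $L_1/K_1$, and since $\mbox{Gal}(L_1/K_1)\simeq E[2]^r$ by Theorem \ref{0813a}, the group $\mbox{Gal}(N_1/K_1)$ is elementary abelian and, being a $G_1$-quotient of $E[2]^r$, is isomorphic to a direct sum of copies of the irreducible module $E[2]$. The first genuine step is to descend to the second layer: by Lemma \ref{even-case2} one has $N_1\subset K_2$, so the task reduces to pinning down the $G_1$-submodule $\mbox{Gal}(K_2/N_1)$ of $\mbox{Gal}(K_2/K_1)\simeq H_1/H_2\simeq M_2(\Z/2\Z)=V_4$.

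Next I would invoke the module classification. Since $\mbox{Gal}(N_1/K_1)=V_4/\mbox{Gal}(K_2/N_1)$ must be isotypic of type $E[2]$, Lemma \ref{submodules} leaves exactly two candidates: $\mbox{Gal}(K_2/N_1)=V_2^{(1)}$, which would force $\mbox{Gal}(N_1/K_1)\simeq E[2]$, and $\mbox{Gal}(K_2/N_1)=V_4$, which gives $N_1=K_1$. Everything thus reduces to excluding the first candidate.

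The decisive step, which I expect to be the main obstacle, is a local ramification comparison at $2$ that rules out $\mbox{Gal}(K_2/N_1)=V_2^{(1)}$. On one side, Lemma \ref{1018c} shows that the fixed field of $V_2^{(1)}$ is totally ramified of degree four over $K_1$ at a prime above $2$. On the other, the Tate-curve analysis of \S 3, together with (\ref{1018a}) and (\ref{1018b}), gives ${\cal N}_1\subset{\cal L}_1\subset{\cal M}(\sqrt{q},\zeta_4)$ with ${\cal K}_1=\Q_2(\sqrt q)$, so the ramification index of ${\cal N}_1/{\cal K}_1$ is at most two. A degree-four totally ramified extension cannot have local ramification index at most two, so $V_2^{(1)}$ is impossible and only $\mbox{Gal}(K_2/N_1)=V_4$ remains, i.e.\ $N_1=K_1$. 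This ramification argument is genuinely needed precisely because, as noted at the outset of the section, $M_2(\Z/2\Z)$ admits $E[2]$ as a $G_1$-quotient; unlike the odd case, the module step alone cannot kill the spurious field with $\mbox{Gal}(N_1/K_1)\simeq E[2]$, and keeping track that $\mbox{ord}_2(q)$ is odd (so that $\Q_2(\sqrt[4]{q},\zeta_4)/\Q_2$ is totally ramified of degree eight) is exactly what forces the incompatibility.
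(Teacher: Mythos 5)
Your proposal reproduces the paper's own proof essentially step for step: the reduction $N_1\subset K_2$ via Lemma \ref{even-case2}, the narrowing to $\mbox{Gal}(K_2/N_1)\in\{V_2^{(1)},V_4\}$ via Lemma \ref{submodules}, and the exclusion of $V_2^{(1)}$ by playing Lemma \ref{1018c} (totally ramified of degree four) against the local containment ${\cal N}_1\subset{\cal L}_1\subset{\cal M}(\sqrt{q},\zeta_4)$, which bounds the ramification index of ${\cal N}_1/{\cal K}_1$ by two; this is exactly the content of subsections 6.1--6.3. One caveat: you should not cite Theorem \ref{0813a} for $\mbox{Gal}(L_1/K_1)\simeq E[2]^r$, because for $p=2$ that theorem is deduced from Lemma \ref{any-case}, which in turn rests on the very statement you are proving, so the citation is circular. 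The fact you actually need --- that $\mbox{Gal}(N_1/K_1)$ is a direct sum of copies of the irreducible module $E[2]$ --- follows already from the injectivity of $\Phi_1$ (every submodule and quotient of $E[2]^r$ is again a sum of copies of $E[2]$), or from Theorem \ref{160120a}, whose proof does not depend on Theorem \ref{p=2}; with that substitution your argument is complete and agrees with the paper's.
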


\section{Examples}
In this section we will give elliptic curves which satisfy the condition in Theorem \ref{main}. 
The computation is done by using Mathematica, version 10, and databases Sage \cite{sage} for elliptic curves over $\Q$ and 
\cite{JR} for local fields.  
\subsection{$p=2$}
Let $E$ be the elliptic curve defined by $y^2+xy+y=x^3-141x+624$. 
This elliptic curve has the conductor $N=2\cdot 71^2=10082$, 
the minimal discriminant $\Delta=2^3\cdot 71^3$, and $j$-invariant $2^{-3}\cdot5^3\cdot 19^3$. 
By the criterion of \cite{DD} one can check that $G_n\simeq {\rm GL}_2(\Z/2^n\Z)$ for any $n\ge 1$ since $4t^3(t+1)+j=0$ 
does not have a rational solution in $t$. 
By {\rm \cite{sage}} we see that $E(\Q)\simeq \Z^2$ and it is generated by 
$P_1=(-6,38)$ and $P_2=(6,-1)$. 
%To prove $\delta_2=0$ we check $N_1:=K_\infty\cap L_1=K_1$.  
%However by  Theorem \ref{Mn}, $N_1=M_1=K_2\cap L_1$. Therefore we have only to check $K_2\cap L_1=K_1$.  
%A direct computation shows that 
%$$L_1=K_1[s,t]/(f(s),g(t))\simeq K_1[T]/(F(T)),\ K_2=(K_1)_{h}$$
%where 
%$$f(s)= s^4 + 24 s^3 + 287 s^2- 8366 s +34098 ,\ g(t)= t^4 - 24 t^3 + 275 t^2- 1622 t+4134  ,$$
%and $(K_1)_{h}$ stands for the splitting field over $K_1$ of 
%$$h(y)=-8 y^{12}-44 y^{11}-272035 y^{10}+\cdots\in \Q[y].$$
%The polynomial $h(y)$ is obtained by deleting $x$ with the equation of $E$ from the essential part of the 
%denominators of $4P,\ P=(x,y)\in E$.  
%The isomorphism to $K_1[T]/(F(T))$ from $L_1$ is 
%given by $T=x+y$ and then we have a polynomial $F(T)=T^{16}+520 T^{14}+\cdots$ over $\Q$ of degree 16. 
%Observe the reductions of $F(T)$ and $h(y)$ modulo $53$. Then $F(T)$ mod 53 is separable and it decomposes into 6 polynomials of degree %2 while 
%$h(x)$ mod 53 is completely decomposable. This means that $L_1\cap K_2=K_1$. Therefore $\delta_2=0$. 
%It follows $\varepsilon(N)=1$ since ${\rm ord}_{73}(\Delta)=3$. 

We apply Theorem {\rm \ref{main}} to $E$ for $p=2$. 
Since $E$ has non-split multiplicative reduction at 2, 
we have $\nu_{71}=1$. 
$r_{2,n}=1, 2$ holds. 
Thus $\kappa_1\ge 2\cdot 1 \cdot (2-1)-2(r_{2,n}-2)-\delta_2-2\cdot 1=0$. 
(It becomes an obvious inequality.)
We also have 
$\kappa_n\ge 2n(2-1)-2(r_{2,n}-2)-2\cdot 1\ge 2n-4$ for $n \ge 2$. 
%Then we have $\kappa_n\ge 2n(2-1)+1-2={\color{red}2n-1}$ for $n\ge 1$. 
Hence the class number $h_{\Q(E[2^n])}$ satisfies 
$$2^{2(n-2)}~|~h_{\Q(E[2^n])}$$
for any $n\ge 2$. In this case we can check $\zeta_4=\sqrt{-1}\in {\cal L}_1$.

\subsection{$p=2$ and $r_{2,n}=1$}
Let $E$ be the elliptic curve defined by 
$$h(x,y):=-(y^2+xy+y)+x^3 + x^2 - 55238 x +4974531=0.$$ 
This elliptic curve has the conductor $N=2\cdot 5^2\cdot 313=15650$, 
the minimal discriminant $\Delta=-2^{19}\cdot 5^6 \cdot 313$, and $j$-invariant 
$-2^{-19}\cdot 313^{-1}\cdot 7^{3}\cdot 103^3\cdot 139^3$. 
Further it has split (resp. non-split) multiplicative reduction at $p=2$ (resp. $313$) and 
potentially good reduction at $5$.  

Similarly one can check that $G_n\simeq {\rm GL}_2(\Z/2^n\Z)$ for any $n\ge 1$. 
By {\rm \cite{sage}} we see that $E(\Q)\simeq \Z^2$ and it is generated by 
$P_1=(\ds\frac{37305}{64}, -\frac{6849551}{512})$ and $P_2=(-75, 2987)$. 

A direct computation shows that $L_1$ is obtained by adding the roots of the following two equations to $K_1$:  
$$\begin{array}{l}
f(x)= 64 x^4 - 149220 x^3+ 6883875 x^2 3+ 5695579750 x    -548615793125,\\
 g(x)=x^4+ 300 x^3 + 110850 x^2  - 56367500 x  +4518668125 . 
\end{array}
$$
These polynomials are obtained as follows. Firstly we compute $$2P=(f_1(x,y),g_1(x,y)),\ f_1,g_1\in \Q(x,y)$$ for $P=(x,y)$. 
For $P_1$, we have the system of algebraic equations 
$$f_1(x,y)=\ds\frac{37305}{64},\ g_1(x,y)=-\frac{6849551}{512},\ h(x,y)=0.$$
By deleting $y$ we obtain $f(x)$ as a unique common factor. 
Similarly we obtain $g(x)$ from $P_2$.   

Since $E$ has split multiplicative reduction at $p=2$, we have ${\cal M}=\Q_2$. 
By using \cite{JR} we see that 
$${\cal K}_1=\Q_2(\sqrt{-2}),\ {\cal L}_1=\Q_2(\sqrt{-2},\sqrt{-3},\sqrt{-10})={\cal K}_1(\sqrt{-3}).$$
Therefore $\zeta_4=\sqrt{-1}\not\in {\cal L}_1$ and hence $r_{2,n}=1$.   

We now apply Theorem {\rm \ref{main}} to $E$ for $p=2$. 
Since $E$ has potentially good reduction at 5, $\nu_{5}=1,0$ 
according as $n=1$ or $n\ge 2$. 
Since $E$ has non-split reduction at 313 and $\mbox{ord}_{313}(\Delta)$ 
is odd, $\nu_{313}=0$. 
Then we have $\kappa_1\ge 2\cdot 1 \cdot (2-1)-2(1-2)-2-2\cdot (1+0)=0$ 
and $\kappa_n\ge 2n \cdot (2-1)-2(1-2)-0-2\cdot (0+0)=2n+2$ for $n\ge 2$. 
Hence the class number $h_{\Q(E[2^n])}$ satisfies 
$$2^{2n+2}~|~h_{\Q(E[2^n])}\ (n\ge 2).$$

\subsection{$p=3$}
Let $E$ be the elliptic curve defined by $y^2+xy=x^3+543x+10026$. 
This elliptic curve has the conductor $N=3\cdot 67^2=13467$, 
the minimal discriminant $\Delta=-3^{11}\cdot 67^3$, and $j$-invariant $3^{-11}\cdot 389^3$. 
By \cite{sage} we see that $G_1\simeq {\rm Gal}(\Z/3\Z)$ and $E(\Q)\simeq \Z^2$ whose generators are given by 
$P_1=(-13,35)$ and $P_2=(39,282)$. Then we can apply 
the criterion of \cite{Elkies} (see also the $j$-invariant in p.\ 961 of \cite{DD}) for $G_2$ to obtain  
 $G_2\simeq {\rm GL}_2(\Z/3^2\Z)$. 
 Therefore the conditions in Theorem \ref{main} for $E$ is fulfilled. 
It follows from $r=2$, $\nu_{67}=0$ that $\kappa_n\ge 2n(2-1)=2n$. 
Hence the class number $h_{\Q(E[3^n])}$ satisfies 
$$3^{2n}~|~h_{\Q(E[3^n])}$$
for each $n\ge 1$.

\par
\bigskip
\par
\bigskip
\begin{flushright}
Fumio SAIRAIJI, \\
Faculty of Nursing,\\
Hiroshima International University,\\
Hiro, Hiroshima\\
737-0112, 
JAPAN\\
e-mail:~sairaiji@it.hirokoku-u.ac.jp
\par
\bigskip
\par
\bigskip
Takuya YAMAUCHI,\\
Mathematical Institute,\\
Tohoku University\\
6-3, Aoba, Aramaki, Aoba-Ku, Sendai 980-8578, 
JAPAN \\
e-mail:~yamauchi@math.tohoku.ac.jp
\end{flushright}

\end{document}